\documentclass{article}      
\usepackage{amssymb}
\usepackage{amsmath}
\usepackage{latexsym}
\usepackage[mathscr]{euscript}
\usepackage{graphicx}
\usepackage{amscd}
\usepackage{amsthm} 
\usepackage{fullpage} 
\usepackage{wrapfig}
\setlength{\oddsidemargin}{0in}
\setlength{\evensidemargin}{0in}
 \setlength{\textwidth}{6in}
\setlength{\textheight}{8.0in}

\newtheorem{theorem}{Theorem}

\newtheorem{lemma}[theorem]{Lemma}  
\newtheorem{proposition}[theorem]{Proposition}


 
\newcommand{\bc}{\mathbb{C}}
\newcommand{\bp}{\mathbb{ P}}
\newcommand{\bz}{\mathbb{Z}}
\newcommand{\br}{\mathbb{R}}
\newcommand{\bq}{\mathbb{Q}}

\newcommand{\p}{\partial}
\newcommand{\cc}{\mathcal{C}}

\newcommand{\cp}{\mathcal{P}}

\newcommand{\hk}{\hookrightarrow}

\newcommand{\med}{\medskip}
\newcommand{\la}{\longrightarrow}
\newcommand{\bfl}{\begin{flushleft}}
\newcommand{\efl}{\end{flushleft}}

\newcommand{\eps}{\epsilon}

\newcommand{\xr}{\xrightarrow}

\newcommand{\bcp}{\bc \bp}

\newcommand{\sgn}{\mathscr S_{g,n}}
\newcommand{\crs}{\mathscr{S}}
\newcommand{\sgnm}{\mathscr S_{g,n,m}}
\newcommand{\fgnm}{F_{g,n,m}}
\newcommand{\dgnm}{Diff(\fgnm; \p)}

\newcommand{\sgo}{\mathscr S_g^1}
\newcommand{\sgok}{\mathscr S_{g,0,1}(K)}
\newcommand{\crgk}{\crs_g(K)}
\newcommand{\G}{\Gamma}
\newcommand{\gp}{\G^1_g}
\newcommand{\gb}{\G_{g,1}}

 \newcommand{\ock}{\Omega^{\infty-1}_0(\bcp^\infty_{-1} \wedge K_+)}
 \newcommand{\ockz}{\Omega^{\infty}(\bcp^\infty_{-1} \wedge K_+)}

 \begin{document}

  \title{Stability for closed surfaces in a background space}  
  \author{Ralph L. Cohen \thanks{The first author was partially supported by  NSF grants 0603713 and 0905809 and ERC-AdG 228082 } \\ Department of Mathematics \\Stanford University \\ Bldg. 380 \\ Stanford, CA 94305, USA \and   Ib Madsen \thanks{The second author was supported by ERC-AdG 228082} \\  Department of Mathematical Sciences \\  University of  Copenhagen \\ Universitetsparken 5 \\DK-2100 Copenhagen, Denmark }
\date{\today}
\maketitle  
 \begin{abstract}  In this paper we present a new proof of the homological stability of the moduli space of closed surfaces in a simply connected background space $K$, which we denote by $\crs_g (K)$.    The homology stability of surfaces in $K$ with an arbitrary number of boundary components, $\sgn (K)$ was studied by the authors in \cite{cohenmadsen}.  The study there relied on stability  results for the homology of mapping class groups, $\G_{g,n}$ with certain families of twisted coefficients.   It turns out that these mapping class groups only have homological stability when $n$, the number of boundary components,  is positive, or in the closed case   when the coefficient modules are trivial.  Because of this we present a new proof of the rational homological stability for $\crs_g(K)$, that is homotopy theoretic in nature.  We also take the opportunity to prove a new stability theorem for closed
 surfaces in $K$ that have marked points.    \end{abstract}

 \tableofcontents

 \section*{Introduction} In \cite{cohenmadsen}, the authors studied stability properties for moduli spaces of surfaces in a simply connected background space, $K$.  These moduli spaces, denoted $\sgn (K)$,  consist of surfaces $S_{g,n}$ of genus $g$ with $n$ parameterized boundary components, together with a map
 $f : S_{g,n} \to K$ which restricts to  the boundary $\p S_{g,n}$ in  a prescribed way.  As observed in \cite{cohenmadsen} the  homotopy type of these moduli spaces don't depend on the choice of this boundary condition, so we will assume $f$ is constant on $\p S_{g,n}$, mapping it to a fixed basepoint $x_0 \in K$. The main results of that paper were Theorems 0.1 and 0.3 in \cite{cohenmadsen}  which together  identify the ``stable homology",  $H_q(\sgn (X))$    for $2q+4 \leq g$.   
 
 This result was proved using spectral sequence and Postnikov tower arguments, whose main input was a calculation of the stable homology of mapping class groups with certain families of twisted coefficients (``coefficient systems of finite degree"),  $H_*(\G_{g,n}; V_{g,n})$.   Here $\G_{g,n}$ is the mapping class group of  orientation preserving diffeomorphisms of a fixed surface $F_{g,n}$ of genus $g$ and $n$-boundary
 components, that fix the boundary pointwise, $\G_{g,n} = \pi_0 (Diff(F_{g,n}, \p F_{g,n}))$. 
 
 The goals of the present note are threefold.  First, we wish to describe an error  in \cite{cohenmadsen} in the group homology calculation for closed surfaces ($n = 0$).  Theorem 0.4  of that paper claims that     for coefficient systems of appropriately finite degree $d$, then  $H_q(\G_{g,n}, V_{g,n})$ is independent of $g$ and $n$, so long as $2q+d+2 \leq g-1.$  This theorem is true, and the proof in \cite{cohenmadsen}
 is correct, so long as $n > 0$.  However, for closed surfaces, $n=0$, this theorem is not true, unless the
 the coefficient system has degree 0 (i.e is constant), which in turn was proved by Harer \cite{harer}, with this
 improved stability range proved by Ivanov \cite{ivanov}.  A counter example to this theorem in the closed case is given by a calculation of S. Morita \cite{morita}, which was pointed out to us by J. Ebert.
 
 The second goal of this paper is to give a new proof of the homological stability theorem for $\crs_g(K)$, for closed surfaces.  This proof is purely homotopy theoretic in nature, using homological stability for $\sgn (K)$ for $n >0$,  and some basic, well known relationships between  diffeomorphism groups of closed surfaces, of surfaces with
 boundary, and those with marked points.     We also make heavy use, as we did in \cite{cohenmadsen},  of the theorem of Madsen and Weiss \cite{madsenweiss} proving a generalization of a conjecture of Mumford, that can be viewed as establishing these stability results when $X$ is a point.  
 
 Along the way to giving this proof, we prove a new theorem,  giving a stability result for closed surfaces  with marked points (see Theorems \ref{markedpoint} and \ref{manymarked} below).  This generalizes results of B\"odigheimer and Tillmann \cite{bodtill}.

 The third and last goal of the paper is to use the homotopy theoretic techniques mentioned above to give a new proof of Morita's calculation of $H_1(\Gamma_g, H_1(F_g))$ which demonstrates the lack of stability for these homology groups.   
 
 We remark that since the writing of \cite{cohenmadsen}, results there have been generalized in two different ways.  In his thesis \cite{boldsen},  S. Boldsen significantly improved the stability range of the homology of mapping class groups, both in the setting of twisted coefficients (when the surface has at least one boundary), and in the setting of closed surfaces, when the coefficients are trivial.    As we will point out below, Boldsen's improved stability range allows for a similar improvement of the stability range for $H_*(\sgn (X))$, $n \geq 0$.  
 Secondly,  in \cite{oscar},  O. Randal-Williams generalized our result for the homological stability of $\sgn (K)$,  
 by proving a stability theorem for surfaces having a general tangential structure.  He also obtained an improved stable range.

 Because of the work of Randal-Williams, the truth of the homological stability for closed surfaces, $\crs_g(K)$ is not in question.  However we take the opportunity in this note to give a new, short  proof  of this theorem when one assumes   rational coefficients (or coefficients in any field of characteristic zero).  This proof is quite easy,
 and shows the relevance of the Becker-Gottlieb transfer map.

 \med
 This paper is organized as follows.
 In section one we review the construction and relationships between the moduli spaces $\sgn (K)$. We then prove the main new theorem of this paper, Theorem \ref{markedpoint},  which gives homological stability of
closed surface spaces with marked points.  In section 2 we describe a relationship with the Becker-Gottlieb transfer map,
and give a short proof of the homological stability of $\crs_g(K)$.  In section 3 we give a new proof of Morita's calculation \cite{morita}.  

\med
The authors would like to thank J. Ebert for originally pointing out the error in \cite{cohenmadsen}.  The first author would also like to thank the Department of Mathematical Sciences at the University of Copenhagen   for its hospitality while this work was being carried out.

 \section{Spaces of surfaces, and stability of closed surfaces with marked points}
 \subsection{Spaces of surfaces and their relationships}
   For each $n \geq 0$,  let $c_n \subset \br^\infty$ be the image of a fixed embedding $e_n : \coprod_n S^1 \hk \br^\infty$.  
 Let $K$ be a simply connected space with basepoint $x_0 \in K$.  We define surface spaces $\sgnm (K)$  much like in \cite{cohenmadsen}:

\begin{align}
\sgnm (K) = &\{(S_{g,n,m}, t, \phi,  f): \,  \text{where $t>0$,  $S_{g,n,m} \subset \br^\infty \times [0,t]$ is a smooth oriented surface of genus $g$ with } \notag  \\
 &\text{ $n+m$ boundary components, $\phi : \coprod_{n+m} S^1 \xr{\cong} \p S$ is a parameterization of the boundary,}\notag \\
& \text{   and $f : S_{g,n,m} \to K$ is a continuous map whose restriction to the boundary is constant} \notag \\
&\text{at the basepoint,  $\p f : \p S_{g,n,m} \to x_0 \in K.$  } \} \notag
\end{align}
In this description,  $S_{g,n,m} \subset \br^\infty \times [0,t]$ is embedded ``neatly" as in \cite{madsenweiss}.  In particular
 the boundary, $\p S$, is partitioned
 $$
 \p S_{g,n,m} = \p_{in}S_{g,n,m}  \sqcup  \p_{out}S_{g,n,m}
 $$
 where the ``incoming"  boundary  $\p_{in}S_{g,n,m}  = S_{g,n,m} \cap (\br^\infty \times \{0\})$ is equal to $c_n$, and the ``outgoing" boundary $\p_{out}S_{g,n,m}  = S_{g,n,m} \cap (\br^\infty \times \{t\})$  is equal to $c_m$.      The parameterization $\phi$ is an orientation preserving diffeomorphism.        
 
Now let $F_{g,n,m}$ be a fixed smooth, oriented surface of genus $g$ with $n+m$ parameterized boundary components;  $n$ of these boundary components are designated as ``incoming", and the remaining $m$ are designated as ``outgoing".  Let $Diff (F_{g,n,m}, \p)$ denote the group of orientation preserving diffeomorphisms  that fix the boundary pointwise. As described in \cite{madsenweiss}, \cite{GMTW}, \cite{cohenmadsen},  the spaces $\sgnm (K)$ are topologized so as to give  homeomorphisms \begin{equation}
\sgnm (K)  \cong    \br  \times  Emb_\p(F_{g, n,m}, \br^\infty \times [0,1]) \times_{\dgnm} Map ((\fgnm, \p), (K, x_0)).
\end{equation}\label{bdry}
Here $Emb_\p(F_{g, n,m}, \br^\infty \times [0,1])$ denotes the space of neat embeddings that extend $e_n : \coprod_n S^1 \to \br^\infty \times \{0\}$ on $\p_{in} F_{g, n,m}$, and $e_m : \coprod_m S^1 \to \br^\infty \times \{1\}$ on $\p_{out} F_{g, n,m}$.  Since this embedding space is contractible with a free action of $\dgnm$, this gives a  homotopy equivalence  
\begin{equation}\label{bound}
 \sgnm (K) \simeq E\dgnm \times_{\dgnm} Map ((\fgnm, \p), (K, x_0)).
\end{equation}
For $n=m=0$, we ease notation by deleting the subscripts.  So \begin{equation}\label{closed}\crs_g(K) \cong \br  \times   Emb (F_g, \br^\infty) \times_{Diff (F_g)} Map (F_g, K).  
\end{equation}

Finally we define the space $\sgo (K)$ to be the space of closed surfaces in the background space $K$ that have a marked point.  In other words,
 
  \begin{align}\label{mark}
\sgo (K) =  &\{(S_g, t, x,  f): \,  \text{where $t>0$,  $S_{g} \subset \br^\infty \times (0,t)$ is a smooth oriented surface of genus $g$, } \notag  \\
 &x \in S_g, \, \text{and  \, $f : S_g \to K$ is a continuous map}\}  \, \notag \\
& \cong \br  \times   Emb (F_g, \br^\infty) \times_{Diff (F_g)} (F_g \times Map (F_g, K)).
\end{align}

We now consider the relationships between the spaces, $\crs_{g, 0,1}$,  $\sgo$, and $\crs_g$.   First consider the map
  $p : \sgo (K) \to \crs_g (K)$  given by forgetting the marked point.  This is the ``universal curve".  Then by (\ref{closed}) and (\ref{mark})  we have the following:

\med
\begin{proposition}\label{markfib} The map $p :  \sgo (K) \to \crs_g (K)$ is a fiber bundle with fiber equal to the surface $F_g$.
\end{proposition}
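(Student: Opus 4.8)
The plan is to exhibit $p$ as a map of associated fiber bundles over a common base built from the embedding space, and then to read off local triviality. Write $G = Diff(F_g)$, $E = Emb(F_g, \br^\infty)$ and $M = Map(F_g, K)$, and set $B = E/G$. The one input carrying real geometric content is that $G$ acts freely on the contractible space $E$ by precomposition and that the orbit map $q : E \to B$ is a \emph{locally trivial} principal $G$-bundle. Freeness is immediate, since an embedding has trivial stabilizer; local triviality is the assertion that the action of a diffeomorphism group on its space of embeddings admits local slices, which is exactly the standard fact underlying the definitions of these moduli spaces in \cite{madsenweiss}, \cite{GMTW}, \cite{cohenmadsen}.

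Granting this, I would use (\ref{closed}) and (\ref{mark}) to identify $\crs_g(K)$ with $\br \times (E \times_G M)$ and $\sgo(K)$ with $\br \times (E \times_G (F_g \times M))$, where $G$ acts diagonally on $F_g \times M$ by its standard action on $F_g$ and by precomposition on $M$; both of these are associated bundles over $B$ via $q$, with fibers $M$ and $F_g \times M$ respectively, and $p$ is carried by the identity on the contractible $\br$-factor. Under these identifications $p$ becomes a map over $B$ that restricts on each fiber to the projection $F_g \times M \to M$. To see it is a fiber bundle, take $b_0 \in \crs_g(K)$ lying over $\beta \in B$, choose an open $V \ni \beta$ over which $q$ is trivialized by a section $\sigma : V \to E$, so that $E|_V \cong V \times G$; this gives $\crs_g(K)|_V \cong \br \times V \times M$ and $\sgo(K)|_V \cong \br \times V \times F_g \times M$, with $p|_V$ the projection forgetting the $F_g$-coordinate. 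Setting $W = \crs_g(K)|_V$, an open neighborhood of $b_0$, we obtain $p^{-1}(W) \cong W \times F_g$ with $p$ the projection, so $p$ is locally trivial with fiber $F_g$.

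The only compatibility to check is that these local trivializations of $\sgo(K)$ as an $F_g$-bundle over $\crs_g(K)$ patch on overlaps; this is automatic, because the transition functions of $q$ take values in $G = Diff(F_g)$ and act on $F_g \times M$ through the diagonal action, hence through $Diff(F_g)$ on the $F_g$-coordinate, which is precisely the structure group of an $F_g$-bundle. Thus the main — and essentially the only — obstacle is establishing that $q : E \to B$ is a principal $G$-bundle; once that is available, the rest is formal associated-bundle bookkeeping, and the identification of $p$ with the universal curve over $\crs_g(K)$ follows.
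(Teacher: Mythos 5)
Your proposal is correct and follows the same route as the paper, which simply asserts the proposition as an immediate consequence of the identifications (\ref{closed}) and (\ref{mark}); you have written out the associated-bundle bookkeeping and the local-slice input ($Emb(F_g,\br^\infty)\to Emb(F_g,\br^\infty)/Diff(F_g)$ being a locally trivial principal bundle) that the paper leaves implicit. Nothing is missing.
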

\med

Consider the fiberwise, or vertical tangent bundle $T_{v}\sgo(K)$.  This is an oriented, two-dimensional vector bundle classified
by a map $\tau_v: \sgo (K) \to \bcp^\infty$.  Concretely, this can be defined as follows (following \cite{madsenweiss} and \cite{GMTW}).
\begin{align}\label{teevee}
\tau_v : \sgo (K) &\to \bcp^\infty   \\
(S_g, t, x,  f)  &\to T_xS_g \subset \br^\infty  \notag \\  \notag
\end{align}
Here we are thinking of $\bcp^\infty$ as the Grassmannian of oriented two dimensional subspaces of $\br^\infty$.  Now consider the evaluation map
\begin{align}\label{ee}
e : \sgo (K) \to K  \\
(S_g, t, x,  f)  &\to f(x) \notag 
\end{align}  

\begin{proposition} \label{bdrymark}  There is a homotopy fibration sequence
$$
\sgok \xr{\iota} \sgo (K) \xr{ \tau_v \times e} \bcp^\infty \times K.
$$
   \end{proposition}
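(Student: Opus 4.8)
The plan is to compute the homotopy fiber of $\tau_v\times e$ by passing to the homotopy quotient descriptions (\ref{bound}), (\ref{mark}) and peeling off the two factors of $\bcp^\infty\times K$ one at a time. The geometric picture to keep in mind is that a point of the homotopy fiber over $(V_0,x_0)$ is a closed surface which, near the marked point $x$, looks like the $2$-plane $V_0$ and on which $f$ is close to $x_0$ there; scooping out a small disk about $x$ leaves a genus $g$ surface with one boundary circle together with a map to $K$ that deforms to be constant at $x_0$ on the boundary, i.e.\ a point of $\sgok$. Conversely one caps the boundary off with a disk mapped to $x_0$ and marks its center; this is the map $\iota$.

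First I would fix a model surface $F_g$ with a distinguished point $x$ and a small closed disk $D\subset F_g$ centered at $x$, and write $Diff(F_g,x)$, $Diff(F_g,D)$ for the orientation preserving diffeomorphism groups fixing $x$, respectively $D$ pointwise. Since the evaluation $Diff(F_g)\to F_g$, $\varphi\mapsto\varphi(x)$, is a fiber bundle with fiber $Diff(F_g,x)$, the natural homeomorphism $EG\times_G(G/H\times Y)\cong EG\times_H Y$ (applied with $G=Diff(F_g)$, $H=Diff(F_g,x)$, $Y=Map(F_g,K)$) turns (\ref{mark}) into a homotopy equivalence
\[
\sgo(K)\ \simeq\ Map(F_g,K)_{hDiff(F_g,x)},
\]
compatible with Proposition \ref{markfib}. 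Under it the evaluation $e$ of (\ref{ee}) becomes $[f]\mapsto f(x)$ (defined because $Diff(F_g,x)$ fixes $x$), and $\tau_v$ of (\ref{teevee}) becomes the composite $Map(F_g,K)_{hDiff(F_g,x)}\to BDiff(F_g,x)\xrightarrow{BD_x}BGL^+(2,\br)=\bcp^\infty$ induced by the derivative homomorphism $D_x\colon Diff(F_g,x)\to GL^+(T_xF_g)$, since the $Diff(F_g,x)$-representation $T_xF_g$ is the restriction of the vertical tangent bundle to the marked point.

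Next I would identify the homotopy fiber of $\tau_v$. The kernel $N$ of the surjection $D_x$ --- diffeomorphisms whose differential at $x$ is the identity --- is homotopy equivalent to $Diff(F_g,D)$; this is one of the standard relations between marked-point and boundary diffeomorphism groups (the space of embedded disks centered at $x$ deformation retracts onto $GL^+(T_xF_g)\simeq SO(2)$). The extension $1\to N\to Diff(F_g,x)\to GL^+(2,\br)\to1$ gives $\operatorname{hofib}(BD_x)\simeq BN$, and pulling the $Map(F_g,K)$-bundle over $BDiff(F_g,x)$ back along $BN\to BDiff(F_g,x)$ identifies $\operatorname{hofib}(\tau_v)\simeq Map(F_g,K)_{hDiff(F_g,D)}$, on which $e$ still restricts to $[f]\mapsto f(x)$. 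Since $Map(F_g,K)\to K$, $f\mapsto f(x)$, is a $Diff(F_g,D)$-equivariant fibration with trivial action on $K$ and fiber $Map((F_g,x),(K,x_0))$, after taking homotopy orbits the homotopy fiber over $x_0$ of $Map(F_g,K)_{hDiff(F_g,D)}\to K$ is $Map((F_g,x),(K,x_0))_{hDiff(F_g,D)}$; hence
\[
\operatorname{hofib}(\tau_v\times e)\ \simeq\ Map((F_g,x),(K,x_0))_{hDiff(F_g,D)}.
\]

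It remains to recognize the right side as $\sgok$, which is the \emph{capping off} step. Restriction to $D$ makes the inclusion $Map((F_g,D),(K,x_0))\hookrightarrow Map((F_g,x),(K,x_0))$ of maps constantly $x_0$ on all of $D$ an equivariant homotopy equivalence (its base $Map((D,x),(K,x_0))$ is contractible). Writing $F_{g,0,1}=F_g\setminus\operatorname{int} D'$ for a slightly smaller concentric disk $D'$ and $A=D\setminus\operatorname{int} D'$ for the resulting collar, extension by the constant map $x_0$ on $D'$ is an equivariant homeomorphism $Map((F_g,D),(K,x_0))\cong Map((F_{g,0,1},A),(K,x_0))$ over the identification $Diff(F_g,D)\cong Diff(F_{g,0,1},A)$; and since the collar deformation retracts onto $\partial F_{g,0,1}$ rel $\partial$, the further inclusions $Map((F_{g,0,1},A),(K,x_0))\hookrightarrow Map((F_{g,0,1},\partial),(K,x_0))$ and $Diff(F_{g,0,1},A)\hookrightarrow Diff(F_{g,0,1},\partial)$ are equivariant homotopy equivalences. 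Passing to homotopy orbits and invoking (\ref{bound}) with $n=0$, $m=1$ gives $Map((F_g,x),(K,x_0))_{hDiff(F_g,D)}\simeq\sgok$, and unwinding the equivalences shows that the inclusion of the homotopy fiber of $\tau_v\times e$ is homotopic to the capping-off map $\iota$, so the displayed sequence is a homotopy fibration sequence. I expect the main obstacle to be the third paragraph: identifying $\operatorname{hofib}(\tau_v)$ with the homotopy orbit space of the disk-fixing (equivalently, boundary-surface) diffeomorphism group, which rests both on the equivalence $N\simeq Diff(F_g,D)$ and on verifying that the $BSO(2)$-valued map occurring there really is $\tau_v$; the remaining steps are formal manipulations of homotopy quotients and equivariant mapping spaces.
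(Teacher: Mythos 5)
Your proposal is correct and follows essentially the same route as the paper: the identification $\sgo(K)\simeq EDiff(F_g,x)\times_{Diff(F_g,x)}Map(F_g,K)$ via the homogeneous space $Diff(F_g)/Diff(F_g,x)\cong F_g$, the fibration $BDiff(F_{g,0,1},\partial)\to BDiff(F_g,x)\to\bcp^\infty$ (the paper's Lemma \ref{fibration}, which you rederive from the derivative extension), the equivariant evaluation fibration $Map(F_g,K)\to K$ to account for the $K$ factor, and the relative homotopy equivalence $(F_{g,0,1},\partial)\simeq(F_g,x)$ to identify the fiber with $\sgok$ are all exactly the paper's ingredients. The only difference is that you compute the homotopy fiber top-down while the paper assembles the fibration bottom-up from the capping map $q$; this is cosmetic.
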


\begin{proof} Consider a fixed, neat embedding of the closed disk, $e : D^2 \subset \br^\infty \times [0, 1/2)$, whose boundary is the fixed embedding of the unit circle, $e_1 : S^1 \hk  \br^\infty \times \{0\}$.  Now let $(S_{g,0,1}, t, \phi,  f) \in \sgok$.   Using the parameterization of the boundary $\phi$, one can identify $\p S_{g,0,1}$ with  $c_1$, which is the image of the unit circle $e_1 :S^1 \hk \br^\infty$.  Let $S_g = S_{g,0,1}\cup_{S^1} D^2$.  This is the closed surface one
obtains by capping off the boundary $\p S_{g,0,1}$.  One can also extend the map $f : (S_{g,0,1}, \p S_{g,0,1}) \to (K, x_0)$ to $S_g$ by defining it to be constant (at the basepoint $x_0$) on $D^2$.   This construction defines a map
\begin{align}
 EDiff(F_{g,0,1}, \p) \times_{Diff(F_{g,0,1}, \p)} &Map ((F_{g,0,1}, \p),  (K, x_0))   \notag \\
 &\xr{q}  EDiff(F_{g}, y) \times_{Diff(F_{g}, y)} Map ((F_{g}, y),  (K, x_0)) \notag
 \end{align}
where  $y \in F_g$ is a marked point and $Diff (F_g, y)$  is the group of orientation preserving diffeomorphisms  that fix $y \in F_g$.     We shall need the well-known homotopy fibration sequence, (see Lemma \ref{fibration} below):
\begin{equation}\label{fibra}
BDiff(F_{g,0,1}, \p)\to BDiff (F_g, y) \xr{\tau_v}  \bcp^\infty
\end{equation}
where, if our model for the classifying space $BDiff (F_g, x)$ is given by $Emb(F_g, \br^\infty)/Diff (F_g, y) = \{(S_g, z): S_g \subset \br^\infty, \, z \in S_g\}$,
then $\tau_v (S_g, z) = T_z S_g \subset \br^\infty$.  Furthermore, since there is an obvious relative homotopy equivalence
$$
(F_{g,0,1}, \p F_{g,0,1}) \simeq  (F_g, y),
$$
we then have an induced equivalence between the mapping spaces
$$
 Map ((F_{g,0,1}, \p),  (K, x_0))  \simeq Map ((F_{g}, y),  (K, x_0))
 $$
 which is equivariant with respect to the homomorphism $Diff(F_{g,0,1}, \p)\to Diff (F_g, y)$.  Therefore there is an induced homotopy fibration sequence
\begin{align}
 EDiff(F_{g,0,1}, \p) \times_{Diff(F_{g,0,1}, \p)} &Map ((F_{g,0,1}, \p),  (K, x_0))    \xr{q}  \notag \\
  &EDiff(F_{g}, y) \times_{Diff(F_{g}, y)} Map ((F_{g}, y),  (K, x_0))  \xr{\tau_v} \bcp^\infty. \notag
\end{align}

Now the inclusion of the based maps into the unbased maps $ Map ((F_{g}, y),  (K, x_0)) \to Map (F_g, K)$ is equivariant with respect to the action
of $Diff(F_g, y)$.  Furthermore it is the inclusion of the fiber of the equivariant fibration
\begin{align}
Map(F_g, K) &\xr{e}K \notag \\
f &\to f(y) \notag
\end{align} where the action of $Diff(F_g, y)$ on $K$ is trivial.  Putting these fibrations together yields a homotopy fibration sequence

\begin{align}\label{fiber}
 EDiff(F_{g,0,1}, \p) \times_{Diff(F_{g,0,1}, \p)} &Map ((F_{g,0,1}, \p),  (K, x_0))    \xr{q} \\
  &EDiff(F_{g}, y) \times_{Diff(F_{g}, y)} Map (F_{g},  K)  \xr{\tau_v \times e } \bcp^\infty \times K. \notag
\end{align}
Now notice that $BDiff(F_g, y) \simeq EDiff(F_g)\times_{Diff(F_g)} F_g.$  This is seen by observing that the natural action of $Diff (F_g)$ on $F_g$ defines a homeomorphism of the  homogeneous
space $Diff(F_g)/Diff(F_g, y)$ with $F_g$.  More generally, if $X$ is any space with a $Diff (F_g)$-action,  there is an equivalence,
$$
EDiff (F_g) \times_{Diff(F_g)} (F_g \times X)  \simeq EDiff(F_g, y) \times_{Diff (F_g,y)} X.
$$
Thus fibration sequence (\ref{fiber}) becomes a homotopy fibration sequence,
\begin{align} 
 EDiff(F_{g,0,1}, \p) \times_{Diff(F_{g,0,1}, \p)} &Map ((F_{g,0,1}, \p),  (K, x_0))    \xr{q}  \notag \\
  &EDiff(F_{g}) \times_{Diff(F_{g})} Map (F_{g},  K)  \xr{\tau_v \times e } \bcp^\infty \times K. \notag
\end{align}
Using (\ref{bound}) and (\ref{mark}), this yields the homotopy fibration sequence,
$$
 \sgok \xr{\iota} \sgo (K) \xr{ \tau_v \times e} \bcp^\infty \times K.
$$ \end{proof}

The homotopy fibration (\ref{fibra})  that was used above is a special case of the following well-known lemma:

\begin{lemma}\label{fibration}  Let $M^d$ be a closed, oriented smooth manifold of dimension $d$, and $y \in M$ a marked point.  There is a homotopy fibration sequence
$$
Diff (M, D_\eps (y)) \to Diff (M, y) \xr{d} GL^+(d, \br)
$$
where $D_\eps(y)$ is a small open disk and the map $d$ denotes the differential at $y$.
\end{lemma}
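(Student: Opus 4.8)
The plan is to realise the differential map $d$, up to homotopy, as the fibration obtained by restricting a diffeomorphism to the disk $D=D_\eps(y)$ --- a fibration whose fibre over the inclusion is \emph{literally} $Diff(M,D_\eps(y))$ --- and then to invoke the classical computation of the homotopy type of the space of embedded disks through a fixed point. (Throughout, $Diff(M,y)$ means orientation-preserving diffeomorphisms fixing $y$, so the differential at $y$ lands in $GL^+$ once an oriented frame of $T_yM$ is chosen, in keeping with the rest of the paper.)

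Concretely, I would fix such a frame, identifying $GL^+(T_yM)$ with $GL^+(d,\br)$, and then consider the restriction map
\[
\rho\colon Diff(M,y) \to Emb^+_y(D,M),\qquad \phi \mapsto \phi|_D,
\]
where $Emb^+_y(D,M)$ denotes the space of orientation-preserving embeddings of $D$ into $M$ carrying the centre $y$ to $y$. By the parametrized isotopy extension theorem, $\rho$ is a Serre fibration onto its image (a union of path components of $Emb^+_y(D,M)$ containing the inclusion), and its fibre over the inclusion $D\hookrightarrow M$ is precisely the subgroup of diffeomorphisms restricting to the identity on $D$, namely $Diff(M,D_\eps(y))$. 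Next I would use the map $\delta\colon Emb^+_y(D,M)\to GL^+(d,\br)$ sending an embedding to its differential at the centre $y$, expressed in the chosen frame; since the differential of $\phi|_D$ at $y$ equals the differential of $\phi$ at $y$, the composite $\delta\circ\rho$ is exactly $d$. Granting that $\delta$ is a homotopy equivalence, it follows that $d$ is, up to the equivalence $\delta$ on the target, the fibration $\rho$; hence the homotopy fibre of $d$ over $\mathrm{id}\in GL^+(d,\br)$ is identified with the honest fibre of $\rho$ over the inclusion, i.e.\ with $Diff(M,D_\eps(y))$. This is precisely the asserted homotopy fibration sequence. (In particular $Emb^+_y(D,M)$ is then connected and $\rho$ is surjective.)

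The one substantive input --- and the step I expect to be the main obstacle --- is that $\delta$ is a homotopy equivalence, equivalently that the space of embeddings of $D_\eps(y)$ into $M$ with a prescribed $1$-jet at the centre is contractible. This is the classical fact underlying the uniqueness of tubular neighbourhoods (Cerf, Palais): the contraction ``zooms in'' an embedding toward its linear part at $y$ via $e_s(x)=\tfrac1s\,e(sx)$ in local coordinates, and the only genuine work is to first isotope an arbitrary embedding, rel its $1$-jet at $y$, so that its image lies in a fixed coordinate chart. The fibration property of $\rho$ is likewise a standard consequence of parametrized isotopy extension. Since Lemma \ref{fibration} enters only as a well-known auxiliary, I would cite these two facts rather than reprove them, and I would flag the contractibility of the jet-constrained embedding space as the real technical heart.
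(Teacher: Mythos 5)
Your proposal is correct and follows essentially the same route as the paper: restriction to a small disk about $y$ is a Serre fibration by the isotopy extension (covering isotopy) theorem, with fibre $Diff(M,D_\eps(y))$, and the base embedding space deformation retracts onto $GL^+(d,\br)$ via the same linearization homotopy $f_t(u)=f(tu)/t$. The only cosmetic difference is that the paper takes the target to be embeddings into a fixed chart $(\br^d,0)$ from the outset, whereas you work with embeddings into $M$ and flag the (genuine but standard) step of first shrinking the image into a chart.
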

\begin{proof}  We fix a chart $(\br^d, 0) \to (M, y)$ around $y$, and let $U_\eps (y)$ be a small open neighborhood of $y$ corresponding to an open $\eps$-disk $\br^d_\eps \subset \br^d$ around the origin.  The covering isotopy thoerem shows that we have a Serre fibration
$$
Diff (M, U_\eps (y)) \to Diff (M,y)  \xr{D} Emb((\br^d_\eps, 0),  (\br^d, 0)).
$$
It remains to show that this embedding space is homotopy equivalent to $GL^+(d, \br)$.   This follows because an embedding $f : (\br^d_\eps, 0) \hk (\br^d, 0)$ is homotopic to a linear map via the standard homotopy
$$
f_t(u) = \begin{cases}  \frac{f(tu)}{t},  \quad &t > 0, \\
df_0(u), \quad &t=0.
\end{cases}
$$
Since $Diff (M, D_\eps (y))$ denotes the group of diffeomorphisms that fixes some open neighborhood $D_\eps (y)$ pointwise, this completes the proof.
\end{proof}

  \subsection{Compatibility of the Pontrjagin-Thom maps}

 As described in \cite{madsenweiss}, \cite{GMTW},  the spaces  $\sgnm (K)$  form the spaces of morphisms in the topological cobordism category $\cc_2(K)$
 whose objects are nonnegative integers $n \geq 0$,  and whose morphisms $Mor_{\cc_2(K)}(n,m)$ are given by the disjoint union of the spaces  
  $$Mor_{\cc_2(K)}(n,m)  = \coprod_{g \geq 0} \sgnm (K),$$ except if $n=m$, in which case $Mor_{\cc_2(K)}(n,m)  = \left(\coprod_{g \geq 0} \crs_{g,n,n}(K)\right) \, \coprod id_n.$  
  
  One of the main theorems of \cite{madsenweiss}, \cite{GMTW} was the identification of the homotopy type of the classifying space of the cobordism category, $B\cc_2(K)$ (as well as other, more general, but similarly defined cobordism categories).  
  
  \bf Remark.  \rm In section 5 of \cite{madsenweiss} a category $\cc_{d, \theta}$ was defined, given a Serre fibration $\theta : B \to BO(d)$.  If $d=2$ and $\theta$ is the composition of the projection and the orientation cover,
  $$
  BSO(2) \times K \to BSO(2) \to BO(2),
  $$
  we have $B\cc_{2, \theta} \simeq B\cc_2(K)$.  The difference between $\cc_{2, \theta}$ and $\cc_2(K)$ is that the objects of the former are restricted to be a fixed union of circles in $\br^\infty$, while in $\cc_{2, \theta}$ the objects are arbitrary closed, oriented one-dimensional manifolds.  Using the covering isotopy theorem, it follows easily that the corresponding nerves are homotopy equivalent.

  \med
   The identification of the homotopy type of  $B\cc_2(K)$ used the Pontrjagin-Thom construction to define a functor to the path category of  the zero space of a certain Thom spectrum.  More specifically, let $\ock$ denote the path component of the basepoint
  in the zero space of the spectrum $\Sigma \bcp^\infty_{-1} \wedge K_+$.  Let $\cp (\ock)$ be its path category.  Namely, given any connected space $Y$, the path category $\cp (Y)$ is the topological category whose objects are the points of $Y$ (topologized as the space $Y$), and the morphisms between points $y_1$ and $y_2$ are the space of paths $\cp_{y_1, y_2}(Y) = \{(\alpha, r): \alpha : [0,r] \to Y \quad \text{is continuous, satisfying} \quad \alpha (0) = y_1, \, \alpha (r) = y_2 \}$.
  The following is standard:
  
  \begin{lemma} Given any connected space $Y$ there is a weak homotopy equivalence $B\cp (Y) \simeq Y$.
  \end{lemma}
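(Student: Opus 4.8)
The plan is to work directly with the nerve $N_\bullet=N_\bullet\cp(Y)$ of the path category and to show that it is a \emph{homotopically constant} simplicial space, meaning that every one of its face and degeneracy maps is a weak homotopy equivalence. Granting this, the lemma is immediate from the standard fact (see, e.g., \cite{GMTW}) that — under mild point-set hypotheses, which hold here since $\ock$ has the homotopy type of a CW complex — the geometric realization of a homotopically constant simplicial space is weakly equivalent to its space of $0$-simplices; here that space is $N_0\cp(Y)=Y$, and the equivalence is realized by the inclusion of the $0$-skeleton $Y\hookrightarrow B\cp(Y)$. So the only thing that really needs attention is the homotopy type of the structure maps of $N_\bullet$.

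Concretely, $N_k\cp(Y)$ is the space of $k$ composable Moore paths,
\[
N_k\cp(Y)=\{(y_0,\dots,y_k;\alpha_1,\dots,\alpha_k):\ \alpha_i\ \text{a path from}\ y_{i-1}\ \text{to}\ y_i\},
\]
i.e., the $k$-fold fibre product of the Moore path space $PY=N_1\cp(Y)$ over its source and target evaluations to $Y$, while $N_0\cp(Y)=Y$. The two outer face maps $d_0$ and $d_k$, which forget the first, resp.\ the last, path, are pullbacks of the Moore path fibration $PY\to Y$; their fibres are spaces of Moore paths out of, resp.\ into, a fixed point, hence contractible, so $d_0$ and $d_k$ are weak equivalences. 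By the same reasoning each evaluation $\mathrm{ev}_j\colon N_k\cp(Y)\to Y$, $(\dots)\mapsto y_j$, is a fibration with contractible fibre, hence a weak equivalence. For an inner face $d_i$ ($0<i<k$), which replaces $\alpha_i,\alpha_{i+1}$ by their concatenation and leaves $y_0$ untouched, one has $\mathrm{ev}_0\circ d_i=\mathrm{ev}_0$; since both evaluations are weak equivalences, two-out-of-three forces $d_i$ to be one too. Finally each degeneracy $s_i$, insertion of a constant path, satisfies $d_i s_i=\mathrm{id}$ with $d_i$ a weak equivalence, so $s_i$ is a weak equivalence as well. Hence $N_\bullet\cp(Y)$ is homotopically constant.

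The step I expect to require the most care is precisely the inner face maps: unlike the outer ones they are not visibly fibrations with contractible fibre, and it is the rigidity of Moore paths — strict associativity of concatenation and strict units — that makes $\cp(Y)$ an honest topological category and makes the two-out-of-three shortcut available (with fixed-length paths one would only get a category up to coherent homotopy, and a correspondingly heavier argument). An alternative route, sidestepping the realization lemma, is to write down an explicit weak equivalence $|N_\bullet\cp(Y)|\to Y$ sending a point $(t_0,\dots,t_k;y_\bullet;\alpha_\bullet)$ of a realization cell to the value of the concatenated path $\alpha_1*\cdots*\alpha_k$ at the weighted parameter $\sum_{i=1}^{k}(t_i+\cdots+t_k)\,\ell(\alpha_i)$, checking that this respects the face and degeneracy identifications and is a homotopy inverse to the $0$-skeleton inclusion $Y=N_0\cp(Y)\hookrightarrow|N_\bullet\cp(Y)|$ — the needed homotopy being the obvious ``slide all the path data along the concatenated path'', whose bookkeeping is the only mildly technical point. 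Either way the hypothesis that $Y$ be connected is inessential; it is stated only because that is all that is used below, where $Y=\ock$.
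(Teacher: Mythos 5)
The paper does not actually prove this lemma --- it is stated with the remark ``The following is standard'' and no argument is given --- so your proposal is supplying a proof where the authors chose to cite folklore. Your argument is correct in outline and is, I think, the standard one: the nerve $N_\bullet\cp(Y)$ is a homotopically constant simplicial space because the Moore-path evaluations are fibrations with contractible fibres, the outer faces are base changes of these, the inner faces are handled by two-out-of-three against $\mathrm{ev}_0$ (this is exactly where the strict associativity and strict units of Moore concatenation are needed, as you say), and degeneracies follow from $d_i s_i = \mathrm{id}$. Two points deserve sharpening. First, the realization step: the clean way to get from ``all simplicial operators are weak equivalences'' to ``$N_0\hookrightarrow |N_\bullet|$ is a weak equivalence'' is via the d\'ecalage $EX_k = X_{k+1}$, whose realization retracts to $X_0$ by the extra-degeneracy argument, together with the theorem that a levelwise weak equivalence of \emph{good} (or proper) simplicial spaces realizes to a weak equivalence; the hypothesis you actually need is therefore that the degeneracies of $N_\bullet\cp(Y)$ are cofibrations (i.e.\ that the length-zero paths sit in the Moore path space as an NDR pair), or else you should work with the fat realization throughout. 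Your justification ``$\ock$ has the homotopy type of a CW complex'' is not the relevant condition and in any case imports the application into a lemma stated for arbitrary connected $Y$. Second, the explicit homotopy inverse sketched at the end is a nice alternative but, as you acknowledge, the verification that the weighted-evaluation formula descends through the face and degeneracy identifications is the entire content there, and it is left unchecked; I would either carry it out or drop it. Neither issue is a gap in the main line of argument.
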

  
  We record the standard fact that in any connected space $Y$ with basepoint $y_0$,  there is a homotopy equivalence of the path space with the loop space, $\cp_{y_1, y_2}(Y)  \simeq \Omega Y$. Such homotopy equivalences are given by choices of fixed paths $\gamma_1$ from $y_0$ to $y_1$, and $\gamma_2$ from $y_2$ to $y_0$.  The homotopy equivalence sends  a path  $\alpha $ from $y_1$ to $y_2$  to the glued path $\gamma_1 \circ \alpha \circ \gamma_2$ which is a loop at the basepoint $y_0 \in Y$.   
  
  In $\cite{madsenweiss}$ the Pontrjagin-Thom construction was used to produce a functor $\alpha : \cc_2(K) \to \cp (\ock)$ and the following was proved.
  \begin{theorem}\cite{madsenweiss} The functor $\alpha : \cc_2(K) \to \cp (\ock)$ induces a weak homotopy equivalence on the level of classifying spaces,
  $$
  \alpha : B \cc_2(K) \to B\cp (\ock) \simeq \ock.
  $$
  \end{theorem}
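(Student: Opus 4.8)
The plan is to follow the strategy of \cite{madsenweiss} (reformulated in \cite{GMTW}): instead of analysing $B\cc_2(K)$ directly, I would route through an auxiliary \emph{space of embedded surfaces} and identify both it and $B\cc_2(K)$ with an infinite loop space. By the Remark above it suffices to work with the tangential-structure category $\cc_{2,\theta}$ for $\theta : BSO(2)\times K \to BO(2)$ the composite of the projection with the orientation cover, since that category has weakly equivalent nerve; and the target is harmless, $B\cp(\ock)\simeq\ock$ by the Lemma identifying a connected space with its path category. So the content is to produce a weak equivalence $B\cc_{2,\theta}\simeq\Omega^{\infty-1}(\bcp^\infty_{-1}\wedge K_+)$ and check it is induced by the Pontrjagin--Thom functor $\alpha$.

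First I would model the nerve by a sheaf of surfaces. For each $N$, introduce the space $\psi_\theta(\br^N)$ of closed (not necessarily compact) $\theta$-surfaces $M^2\subset\br^N$, topologised in the ``Thom space'' manner so that neighbourhoods of $M$ consist of small normal graphs and surfaces are allowed to run off to infinity; set $\psi_\theta=\colim_N\psi_\theta(\br^N)$. Singling out one coordinate direction of $\br^1\times\br^\infty$ and cutting $M$ at finitely many regular levels along which it is cylindrical yields a semi-simplicial space whose realisation receives a map from $N_\bullet\cc_{2,\theta}$; a fairly formal argument --- subdividing the cut levels and using the covering isotopy theorem to compress embeddings --- shows this map is a weak equivalence. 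Hence $B\cc_{2,\theta}$ is weakly equivalent to the subspace of $\psi_\theta$ of surfaces cylindrical near $\pm\infty$ in the distinguished coordinate, which by scanning that coordinate is $\Omega$ of a ``one coordinate fewer'' version of $\psi_\theta$; this is precisely where the loop degree drops from $\Omega^\infty$ to $\Omega^{\infty-1}$.

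The technical heart is the \emph{parametrised Pontrjagin--Thom theorem}: $\psi_\theta(\br^N)$ is weakly equivalent to $\Omega^N\mathrm{Th}\big(\theta_N^{*}\gamma_{2,N}^{\perp}\big)$, the $N$-fold loop space of the Thom space of the orthogonal complement of the tautological plane bundle pulled back to the total space of $\theta$ over $Gr_2(\br^N)$, compatibly as $N\to\infty$, so that $\psi_\theta\simeq\Omega^\infty MT\theta$ with $MT\theta$ the Thom spectrum of $-\theta^{*}\gamma_2$. One direction is the ordinary fibrewise Pontrjagin--Thom collapse; the reverse --- straightening a ``formal'' section to an honest embedded $\theta$-surface --- is where one invokes Gromov's h-principle for the (microflexible) sheaf of submanifolds, or equivalently the explicit compression/surgery argument of \cite{GMTW}. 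I expect this to be the main obstacle: all the surrounding identifications are bookkeeping with semi-simplicial spaces and quasifibrations, while this step genuinely uses the differential topology of families of submanifolds of $\br^\infty$.

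Finally I would compute $MT\theta$ and assemble. As $\theta$ factors through $BSO(2)$, the virtual bundle $-\theta^{*}\gamma_2$ is $-\gamma_2^{SO}$ times the trivial bundle over $K$, so $MT\theta\simeq MTSO(2)\wedge K_+$; and under $BSO(2)\cong\bcp^\infty$ (the Grassmannian of oriented $2$-planes), $MTSO(2)$ is the spectrum $\bcp^\infty_{-1}$, giving $MT\theta\simeq\bcp^\infty_{-1}\wedge K_+$. Combining with the two previous steps, $B\cc_{2,\theta}\simeq\Omega^{\infty-1}(\bcp^\infty_{-1}\wedge K_+)$, and restricting to the path component hit by $\cc_2(K)$ (a single component, since $K$ is connected) gives $\ock$. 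It then remains to check that this composite equivalence is the one induced by $\alpha$ --- a diagram chase comparing the cut-at-regular-levels semi-simplicial model with the definition of the functor $\alpha$ into the path category, which is exactly how $\alpha$ was built from the Pontrjagin--Thom construction in \cite{madsenweiss}. Together with $B\cp(\ock)\simeq\ock$, this gives the stated equivalence $\alpha : B\cc_2(K)\to B\cp(\ock)\simeq\ock$.
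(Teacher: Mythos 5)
The paper does not actually prove this theorem: it is quoted verbatim from \cite{madsenweiss} (in the cobordism-category formulation of \cite{GMTW}), so there is no internal argument to compare yours against. Judged on its own, your outline is a faithful reconstruction of the proof in those references, in the later ``spaces of manifolds'' packaging: reduce to $\cc_{2,\theta}$ using the Remark, dispose of the target via $B\cp(\ock)\simeq\ock$, compare $N_\bullet\cc_{2,\theta}$ with a cut-at-regular-levels semi-simplicial resolution of a space of embedded surfaces, deloop by scanning in all but the distinguished coordinate (whence $\Omega^{\infty-1}$ rather than $\Omega^\infty$), prove the parametrised Pontrjagin--Thom equivalence with $\Omega^\infty MT\theta$, and compute $MT\theta\simeq \bcp^\infty_{-1}\wedge K_+$ from the factorisation of $\theta$ through $BSO(2)\times K$. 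Two caveats, neither fatal. First, as you yourself flag, the parametrised Pontrjagin--Thom step is the entire mathematical content and is only named, not carried out; a referee would not accept ``invoke the h-principle for the microflexible sheaf of submanifolds'' without the compression/surgery argument, and the original \cite{GMTW} proof in fact runs through sheaf models on the site of smooth manifolds and Phillips' submersion theorem rather than the $\psi_\theta(\br^N)$ spaces you describe (the latter are Galatius--Randal-Williams's streamlining; the two are interchangeable but you should pick one and cite it precisely). Second, the component bookkeeping at the end is automatic here: since $K$ is simply connected, $\pi_{-1}(\bcp^\infty_{-1}\wedge K_+)=\pi_{-1}(\bcp^\infty_{-1})=0$, so $\Omega^{\infty-1}(\bcp^\infty_{-1}\wedge K_+)$ is already connected and equals $\ock$; no restriction to a component is needed.
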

  
  Now for each $n\geq 0$ let $\gamma_n \in \ock$ be the image of the functor $\alpha$ on  $n  \in Ob \, \cc_2(K)$.  The functor $\alpha$ is ``pointed" in that $\gamma_0 \in \ock$ is the basepoint.    Now consider the map defined on the level of morphisms,
  $$
  \alpha_{n, m} : \coprod_{g \geq 0} \sgnm (K)  \to \cp_{\gamma_n, \gamma_m}  (\ock) \simeq \ockz.
  $$
  
  We consider the homotopy compatibility of these maps as $n$ and $m$ vary.  We focus our attention on the two cases  $\alpha_{0,1}$ and $\alpha_{0,0}$. 
  To compare them we consider a morphism $ (D^2, 1, e) \in \mathscr S_{0,1,0}(K) \subset Mor_{\cc_2(K)}(1, 0)$.  Here $D^2$  is embedded in $\br^\infty \times [0, 1]$ with boundary equal to $c_1 \subset \br^\infty \times \{0\}$.  The embedding is fixed, and has the property that its intersection with $\br^\infty \times [\frac{1}{2}, 1]$ is empty.  (In other words its image lies in $\br^\infty \times [0, \frac{1}{2})$.) The map $e : D^2 \to K$ is constant at the basepoint $x_0 \in K$.  
  
  Notice that composing with the morphism $(D^2, 1, e)$  defines a map
  $$
\kappa : \mathscr S_{g, 0,1}(K) \to \crs_g(K).
  $$
  This amounts to ``capping off the hole"   in a surface with one boundary component, and extending a map from that surface to the resulting closed surface
  by letting it be constant on the capping disk.
  
  Now let $\delta = \alpha_{1,0}(D^2, 1, e)$ be the image under the functor $\alpha$ of  $(D^2, 1, e)$, viewed as a morphism in $Mor_{\cc_2(K)}(1, 0)$.  The following compatibility theorem is now simply a result of the functoriality of $\alpha$.
  
  \begin{theorem}\label{compatible}  The following diagram commutes:
  $$\begin{CD}
  \mathscr S_{g, 0,1}(K)  @>\alpha_{0,1}  >> \cp_{\gamma_0, \gamma_1}(\ock) \\
  @V\kappa VV   @V\simeq V\delta V \\
  \mathscr S_g(K)   @>>\alpha_{0,0} > \cp_{\gamma_0, \gamma_0}(\ock) @>>=> \ockz.
  \end{CD}$$
  \end{theorem}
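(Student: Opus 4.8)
The plan is to read the statement off from the functoriality of the Pontrjagin--Thom functor $\alpha : \cc_2(K)\to\cp(\ock)$, after identifying the two vertical maps of the square with the corresponding operations in the two categories. First I would record what is essentially built into the construction of $\kappa$: for $S=(S_{g,0,1},t,\phi,f)\in\mathscr S_{g,0,1}(K)$, viewed as a morphism in $Mor_{\cc_2(K)}(0,1)$, and for the fixed disk $(D^2,1,e)\in\mathscr S_{0,1,0}(K)\subset Mor_{\cc_2(K)}(1,0)$, the categorical composite $(D^2,1,e)\circ S$ in $\cc_2(K)$ --- obtained by stacking, gluing along the common circle $c_1$, and rescaling the height coordinate --- is precisely the closed surface $S_{g,0,1}\cup_{S^1}D^2=S_g$ together with $f$ extended by the constant map $x_0$ across the cap; that is, $(D^2,1,e)\circ S=\kappa(S)\in\crs_g(K)$. (The genus is additive and the composite is visibly not the identity $id_0$, so it lands in the correct component of $Mor_{\cc_2(K)}(0,0)$.)

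Granting this, functoriality gives
$$
\alpha_{0,0}(\kappa(S))\;=\;\alpha\big((D^2,1,e)\circ S\big)\;=\;\alpha_{1,0}(D^2,1,e)\circ\alpha_{0,1}(S)\;=\;\delta\circ\alpha_{0,1}(S),
$$
where $\delta=\alpha_{1,0}(D^2,1,e)\in\cp_{\gamma_1,\gamma_0}(\ock)$. Now the right-hand vertical arrow in the diagram is by definition concatenation with $\delta$, i.e. the map $\cp_{\gamma_0,\gamma_1}(\ock)\to\cp_{\gamma_0,\gamma_0}(\ock)$, $\beta\mapsto\delta\circ\beta$; this is a homotopy equivalence, since concatenation with the reversed path of $\delta$ is a homotopy inverse --- the standard translation fact, recorded above, that $\cp_{y_1,y_2}(Y)\simeq\om Y$. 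The bottom edge $\cp_{\gamma_0,\gamma_0}(\ock)\xr{=}\ockz$ is the usual identification $\cp_{y_0,y_0}(Y)\simeq\om Y$ together with $\om\,\ock\simeq\ockz$. Thus the displayed equality is exactly the assertion that the square --- and hence the whole diagram --- commutes up to homotopy, which is all that will be used.

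I do not expect a real obstacle here: the entire content is that $\kappa$ is categorical composition with the capping disk and that $\alpha$ is a functor of topological categories. The only thing requiring a moment's attention is the bookkeeping in the first paragraph --- checking, against the precise neat-embedding and rescaling conventions of $\cc_2(K)$, that the categorical composite is the capping-off map on the nose (this uses that the embedding of $(D^2,1,e)$ was arranged to have image in $\br^\infty\times[0,\tfrac{1}{2})$) --- together with the harmless matter of fixing the translations $\cp_{y_1,y_2}(Y)\simeq\om Y$ consistently so that the two vertical maps really are the ones named $\kappa$ and $\delta$.
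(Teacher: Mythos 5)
Your proposal is correct and matches the paper's argument, which is simply the one-line observation that the diagram is "a result of the functoriality of $\alpha$" once $\kappa$ is recognized as categorical composition with the fixed capping-disk morphism $(D^2,1,e)\in Mor_{\cc_2(K)}(1,0)$. Your write-up merely spells out the bookkeeping (the gluing/rescaling identification of the composite with $\kappa(S)$, and the identification of the right vertical map as concatenation with $\delta$) that the paper leaves implicit.
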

  where the right hand vertical map is concantentation with the fixed path $\delta$.   
   
\noindent
\bf Comments.  \rm
(1).  Notice that the map $\kappa$ in this theorem is homotopic to the projection map
 $$
 EDiff(F_{g, 1}, \p) \times_{Diff(F_{g, 1}, \p)} Map((F_{g, 1}, \p(F_{g,1})), (K, x_0)) \la  EDiff(F_g) \times_{Diff(F_g)} Map (F_{g} , K)
 $$
 defined by capping off the boundary of $F_{g,1}$ with a disk, and extending a map $f : (F_{g, 1}, \p(F_{g,1})) \to  (K, x_0)$ to the closed surface $F_g = F_{g,1} \cup D^2$, by defining it on $D^2$ to be constant at the basepoint $x_0 \in K$.   It therefore factors, up to homotopy, as the composition,
 $$
   \sgok  \xr{\iota} \sgo (K) \xr{p} \crs_g(K)
   $$
   where $\iota$ is as in Proposition \ref{markfib}, and $p: \sgo (K) \to \crs_g(K)$ forgets the marked point.   
    
 (2).  Similar compatibility results between the $\alpha_{n,m}$'s exist in general, by capping off various boundary circles.  We leave the formulation of these to the reader.
  
  \subsection{Stability with marked points}
  
  We now have the ingredients necessary to prove the main theorem of this section,  which gives a stability theorem for the moduli space of closed surfaces in a background space with marked points.   
  
  Consider the space $\sgo(K)$.  Let $\tilde \alpha : \sgo (K) \to  \ockz$ be the composition
\begin{equation}\label{alphao}
  \tilde \alpha : \sgo (K)  \xr{p} \crs_g(K) \xr{\alpha} \ockz.
\end{equation}
  We then define the map $\alpha^1 : \sgo (K) \to \ockz \times \bcp^\infty \times K
 $ to be the product  $\alpha^1 = \tilde \alpha \times \tau_v \times e$, with $\tau_v$ and $e$ as defined in (\ref{teevee}) and (\ref{ee}).
 
 \begin{theorem}\label{markedpoint}
 The map
 $$
 \alpha^1 : \sgo (K) \to \ockz \times \bcp^\infty \times K
 $$
 induces an isomorphism in integral homology, $H_q(-; \bz)$,  for $3q \leq 2g-2.$
 \end{theorem}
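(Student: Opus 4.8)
The plan is to recognize $\alpha^1$ as a map of homotopy fibration sequences over the simply connected base $\bcp^\infty \times K$, and then to run a comparison of Serre spectral sequences. For the domain I would use the homotopy fibration of Proposition \ref{bdrymark},
$$\sgok \xr{\iota} \sgo(K) \xr{\tau_v \times e} \bcp^\infty \times K,$$
and for the target the trivial fibration
$$\ockz \la \ockz \times \bcp^\infty \times K \la \bcp^\infty \times K.$$
Since $\alpha^1 = \tilde\alpha \times \tau_v \times e$ by definition, it covers the identity of $\bcp^\infty \times K$; hence it restricts on fibers to a map $\bar\alpha \colon \sgok \to \ockz$, namely $\bar\alpha = \tilde\alpha \circ \iota$.

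The heart of the argument is the identification of $\bar\alpha$. By definition $\tilde\alpha = \alpha \circ p$ with $p\colon \sgo(K)\to\crs_g(K)$ the forgetful map, and by Comment (1) following Theorem \ref{compatible} the composite $p\circ\iota$ is homotopic to the capping-off map $\kappa\colon\sgok\to\crs_g(K)$. Therefore $\bar\alpha \simeq \alpha_{0,0}\circ\kappa$, and Theorem \ref{compatible} identifies $\alpha_{0,0}\circ\kappa$, up to the homotopy equivalence given by concatenation with the fixed path $\delta$, with the Pontrjagin--Thom map
$$\alpha_{0,1}\colon \sgok \la \cp_{\gamma_0,\gamma_1}(\ock)\simeq\ockz.$$
Consequently $\bar\alpha$ is an isomorphism on $H_q(-;\bz)$ in precisely the range in which $\alpha_{0,1}$ is.

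Next I would invoke homological stability for surface spaces with nonempty boundary. The space $\sgok$ is $\mathscr S_{g,0,1}(K)$, which has one boundary circle, so Theorems 0.1 and 0.3 of \cite{cohenmadsen} (in the improved range due to Boldsen \cite{boldsen}) apply and are unaffected by the error discussed in the introduction. Combined with the theorem of Madsen--Weiss, these give that $\alpha_{0,1}$, hence $\bar\alpha$, induces an isomorphism on $H_q(-;\bz)$ for $3q\le 2g-2$. Since $K$ is simply connected, the base $\bcp^\infty\times K$ is simply connected, so the Serre spectral sequences of the two fibrations carry only trivial local coefficients; the Zeeman comparison theorem then propagates the isomorphism on fibers, together with the (identity) isomorphism on bases, to an isomorphism on total spaces $H_q(\sgo(K);\bz) \xr{\cong} H_q(\ockz\times\bcp^\infty\times K;\bz)$ for $3q\le 2g-2$.

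The only point requiring real care is the identification $\bar\alpha \simeq \delta\cdot\alpha_{0,1}$: one must track basepoints and the relevant path component of $\ockz$ so that Theorem \ref{compatible} applies verbatim, and check that the homotopies involved respect the fibration structure over $\bcp^\infty\times K$. Once this is in place the spectral sequence comparison and the appeal to the established $n\ge 1$ stability are formal, which is why the argument is short.
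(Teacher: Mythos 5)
Your proposal is correct and follows essentially the same route as the paper: the paper's proof is exactly the comparison of the fibration $\sgok \to \sgo(K) \to \bcp^\infty \times K$ with the product fibration over the same simply connected base, using Theorem \ref{compatible} to identify the map on fibers with $\alpha_{0,1}$ and the known stability for surfaces with one boundary component to conclude. Your write-up merely makes explicit the details (the role of the path $\delta$, the appeal to the $n\geq 1$ stability theorem together with Madsen--Weiss, and the Zeeman comparison) that the paper compresses into two sentences.
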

 \begin{proof}  By Propostion \ref{markfib} and Theorem \ref{compatible}, we have the following homotopy commutative diagram of homotopy fibration sequences:
$$
\begin{CD}
 \sgok  @>\iota >> \sgo (K) @>\tau_v \times e >> \bcp^\infty \times K \\
 @V\alpha_{0,1} VV  @V\alpha^1VV  @VV=V  \\
 \ockz   @>>> \ockz \times \bcp^\infty \times K  @>>>\bcp^\infty \times K
 \end{CD}
 $$
 Since the basespace of these fibrations, $\bcp^\infty \times K$, is simply connected, and since $\alpha_{0,1}$ induces a homology isomorphism
 in this range,  then $\alpha^1$ induces a homology isomorphism in this range.
 \end{proof}
 
 We remark that we can also consider moduli spaces of closed surfaces in a background space $K$ with $q$-marked points, $\crs^q_g(K)$.  Completely analogous arguments to the above prove
 the following.  We leave details to the interested reader:
 
 \begin{theorem}\label{manymarked}  There is a map
 $$
 \alpha^q : \crs^q_g(K)  \to \ockz \times (\bcp^\infty)^q \times K
 $$
 that induces an isomorphism in integral homology, $H_q(-; \bz)$,  for $3q \leq 2g-2.$
 \end{theorem}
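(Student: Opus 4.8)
The plan is to mirror the proof of Theorem \ref{markedpoint} verbatim, replacing the single marked point by $q$ of them, and then invoking Theorem \ref{markedpoint} (or, more precisely, the homological stability of $\crs_{g,0,1}(K)$ that underlies it) as the base case of an induction on $q$. First I would set up the analogue of the ``universal curve'' fibration: forgetting the last marked point gives a fiber bundle $p_q : \crs^q_g(K) \to \crs^{q-1}_g(K)$ with fiber $F_g$, exactly as in Proposition \ref{markfib}, since $\crs^q_g(K) \cong EDiff(F_g)\times_{Diff(F_g)} \bigl(Conf_q(F_g) \times Map(F_g,K)\bigr)$ and $Conf_q(F_g) \to Conf_{q-1}(F_g)$ is a bundle with fiber $F_g$ minus $q-1$ points. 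To keep the vertical tangent data clean I would instead use the configuration-space-free model $\crs^q_g(K) \cong EDiff(F_g)\times_{Diff(F_g)}(F_g^q \times Map(F_g,K))$ (allowing marked points to collide is harmless for the homotopy type via the equivalence in the proof of Proposition \ref{bdrymark}), so that $\crs^q_g(K)$ carries $q$ fiberwise tangent bundles $T_v^{(i)}$ classified by maps $\tau_v^{(i)} : \crs^q_g(K) \to \bcp^\infty$, together with $q$ evaluation maps which all factor through a single evaluation $e : \crs^q_g(K) \to K$ remembering only $f(x_1)$; the product $\tau_v := (\tau_v^{(1)},\dots,\tau_v^{(q)})$ lands in $(\bcp^\infty)^q$.

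Next I would define $\alpha^q := \tilde\alpha \times \tau_v \times e$, where $\tilde\alpha : \crs^q_g(K) \xr{p^q} \crs_g(K) \xr{\alpha} \ockz$ is the composite of the total forgetful map with the Madsen--Weiss map $\alpha$. The key diagram is the map of homotopy fibration sequences
$$
\begin{CD}
\crs^{q-1}_{g,0,1}(K) @>>> \crs^q_{g,0,1}(K) @>\tau_v^{(q)}\times e'>> \bcp^\infty\times K'\\
@VVV @VVV @VV=V\\
\ockz\times(\bcp^\infty)^{q-1}\times K @>>> \ockz\times(\bcp^\infty)^q\times K @>>> \bcp^\infty\times K'
\end{CD}
$$
obtained by combining the bundle $p_q$ with the fibration of Proposition \ref{bdrymark} and Theorem \ref{compatible}: forgetting the last point, then capping, is homotopic to capping then forgetting. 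Here the left vertical map is $\alpha^{q-1}$ for the capped-off surface with one fewer marked point, and by the inductive hypothesis it is a homology isomorphism in the range $3q\le 2g-2$; since $\bcp^\infty\times K$ is simply connected, the Zeeman comparison theorem (or the Serre spectral sequence) forces the middle vertical map $\alpha^q$ to be a homology isomorphism in the same range. The base case $q=0$ is the homological stability of $\crs_g(K)$ itself in this range — which is exactly what Theorem \ref{markedpoint} combined with Proposition \ref{bdrymark} delivers (or, after Randal-Williams, is known unconditionally) — and $q=1$ is Theorem \ref{markedpoint} directly.

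The main obstacle is bookkeeping the compatibility of the various Pontrjagin--Thom maps $\alpha_{0,1}$ and $\alpha_{0,0}$ across the tower of forgetful maps: one must check that the composite ``cap off the boundary, forget marked points'' agrees up to homotopy with ``forget marked points, cap off the boundary,'' and that under these identifications the fiberwise tangent classifying maps $\tau_v^{(i)}$ on $\crs^q_{g,0,1}(K)$ correspond correctly to those on $\crs^q_g(K)$. All of this is routine given Theorem \ref{compatible} and the functoriality of $\alpha$ — essentially the comment following Theorem \ref{compatible} iterated $q$ times — so I would state the fibration-of-fibrations diagram, note that each square commutes up to homotopy by functoriality of the cobordism-category construction, and leave the diagram-chase to the reader exactly as the statement of Theorem \ref{manymarked} invites. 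The stability range is preserved at every stage because the range in Theorem \ref{markedpoint}, $3q\le 2g-2$, is independent of the number of marked points, so no loss is incurred in the induction.
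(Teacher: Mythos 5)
The paper gives no written proof of Theorem~\ref{manymarked} --- it only asserts that the argument is ``completely analogous'' to that of Theorem~\ref{markedpoint}. The intended analogue is a \emph{single} comparison of fibrations, not an induction: excising disjoint disks about the $q$ marked points gives a homotopy fibration sequence
$$
\mathscr{S}_{g,0,q}(K) \longrightarrow \crs^q_g(K) \xr{\ \prod_i (\tau_v^{(i)} \times e_i)\ } (\bcp^\infty \times K)^q,
$$
the map $\alpha_{0,q}$ on the fibre is a homology isomorphism in the stated range by the stability theorem for surfaces with $q>0$ boundary components from \cite{cohenmadsen} (with Boldsen's range), and the Serre spectral sequence comparison over the simply connected base finishes the proof. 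Your induction, by contrast, does not close. The fibre of $\crs^q_g(K) \to \bcp^\infty \times K$ (tangent plane and evaluation at the last marked point) is the moduli space of surfaces with one \emph{boundary} component and $q-1$ marked points, not the closed moduli space $\crs^{q-1}_g(K)$ to which your inductive hypothesis applies. You bridge this gap with the capping map $\kappa$ (``forgetting then capping is homotopic to capping then forgetting''), but $\kappa$ is very far from a homology equivalence: it factors as $\sgok \xr{\iota} \sgo(K) \xr{p} \crs_g(K)$, a fibre inclusion over $\bcp^\infty \times K$ followed by an $F_g$-bundle projection, and the entire point of this paper is that the closed case cannot be reduced to the bordered case by capping alone. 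Consequently the left-hand vertical map in your comparison diagram is not a homology isomorphism and the Zeeman/Serre argument does not apply. The induction can be repaired, but only by strengthening the inductive statement to bordered surfaces with marked points (converting one marked point into a boundary component at each stage, terminating at $\mathscr{S}_{g,0,q}(K)$), at which point it collapses into the one-step argument above.

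Three smaller problems. First, the $q$ evaluation maps $e_i(f) = f(x_i)$ do not factor through a single evaluation, so the natural target of $\alpha^q$ is $\ockz \times (\bcp^\infty)^q \times K^q$; the single factor of $K$ in the statement is best read as a typo in the paper, not something your argument should manufacture. Second, replacing the ordered configuration space $\operatorname{Conf}_q(F_g)$ by $F_g^q$ is not ``harmless for the homotopy type'': the Borel constructions $EDiff(F_g) \times_{Diff(F_g)} \operatorname{Conf}_q(F_g)$ and $EDiff(F_g) \times_{Diff(F_g)} F_g^q$ differ already for $q=2$, and only the configuration-space model admits the fibration over $(\bcp^\infty \times K)^q$ with fibre $\mathscr{S}_{g,0,q}(K)$, since one must excise \emph{disjoint} disks. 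Third, your proposed base case $q=0$ (integral stability for $\crs_g(K)$ itself) does not follow from Theorem~\ref{markedpoint} together with Proposition~\ref{bdrymark}; within this paper that statement is only obtained rationally, via the transfer argument of Theorem~\ref{main}. Starting the induction at $q=1$ avoids this, but the inductive step remains broken as described.
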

 
  \section{Relation to the transfer}
  Consider the bundle
  $ F_g \to \sgo (K) \xr{p} \crgk$ described in Proposition \ref{markfib}.  The Becker-Gottlieb transfer map \cite{beckergottlieb} is a map of suspension spectra,
  $$
  t : \Sigma^\infty (\crgk_+)   \to \Sigma^\infty (\sgo (K)_+).
  $$
  Its induced map in integral cohomology, $t^*: H^*(\sgo (K)) \to H^*(\crgk)$ has the following  well-known properties:
  
  \begin{lemma}\label{trans} For $\beta \in H^*(\crgk)$ and $\eps \in H^*(\sgo (K))$,
  \begin{enumerate}
  \item $ t^*(p^*(\beta)\eps) = \beta\cdot t^*(\eps)$
  \item $t^*(1) = \chi (F_g) \in H^0(\crgk).$
  \end{enumerate}  Here  $\chi (F_g) = 2-2g$ is the Euler characteristic.
  \end{lemma}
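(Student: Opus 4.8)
The plan is to deduce both properties from the standard construction of the Becker--Gottlieb transfer \cite{beckergottlieb} applied to the smooth fibre bundle $F_g \to \sgo(K) \xrightarrow{p} \crgk$ of Proposition \ref{markfib}; both statements are classical, so the argument is essentially bookkeeping with that construction. First I would recall the construction itself. Choose a fibrewise embedding of $p$ into a trivial bundle $\crgk \times \br^N$ for $N$ large, with fibrewise normal bundle $\nu$ satisfying $\nu \oplus T_v\sgo(K) \cong \epsilon^N$, where $T_v\sgo(K)$ is the vertical tangent bundle. A fibrewise tubular neighbourhood gives an open embedding of the total space of $\nu$ into $\crgk \times \br^N$, and collapsing the complement produces a Pontrjagin--Thom map $\Sigma^N(\crgk_+) \to \mathrm{Th}(\nu)$. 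Composing the Thom-space diagonal $\mathrm{Th}(\nu) \to \sgo(K)_+ \wedge \mathrm{Th}(\nu)$ with the map $\mathrm{Th}(\nu) \to S^N$ induced by the stable trivialization of $\nu \oplus T_v\sgo(K)$ gives a map $\mathrm{Th}(\nu) \to \Sigma^N(\sgo(K)_+)$, and desuspending the composite defines $t : \Sigma^\infty(\crgk_+) \to \Sigma^\infty(\sgo(K)_+)$.

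For property (1), the key observation is that $t$ is a map of module spectra over $\Sigma^\infty(\crgk_+)$. The collapse map $\Sigma^N(\crgk_+) \to \mathrm{Th}(\nu)$ respects the $\crgk_+$-module structure because the tubular neighbourhood lies over $\crgk$, and the second map $\mathrm{Th}(\nu) \to \Sigma^N(\sgo(K)_+)$ respects it because it is built from the bundle projection $\nu \to \sgo(K)$ covering $p$. Applying $H^*(-;\bz)$ to the resulting module identity, and recalling that the $H^*(\crgk)$-module structure on $H^*(\sgo(K))$ is given by $p^*$ together with cup product, yields exactly $t^*(p^*(\beta)\,\eps) = \beta\cdot t^*(\eps)$.

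For property (2), write $t^*(1) = t^*(p^*(1))$, so by (1) it suffices to identify $t^*(1) \in H^0(\crgk)$. By the defining property of the Becker--Gottlieb transfer, the composite $t^*\circ p^* : H^*(\crgk) \to H^*(\crgk)$ is multiplication by the fibre integral $p_!\big(e(T_v\sgo(K))\big)$ of the Euler class of the vertical tangent bundle; since $\crgk$ is connected and the integral of the Euler class of a closed oriented surface over that surface equals its Euler characteristic, this constant is $\chi(F_g) = 2-2g$, whence $t^*(1) = \chi(F_g)$. Alternatively one can simply cite \cite{beckergottlieb} for both statements verbatim. I do not expect a genuine obstacle here; the only point requiring care is matching the normalization in (2) --- that ``transfer followed by collapse'' is multiplication by the Euler characteristic itself, and not its negative or square --- which is pinned down either by the Euler-class computation above or by testing on a trivial bundle $\crgk \times F_g$.
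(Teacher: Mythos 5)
Your proof is correct and matches the paper's treatment: the paper offers no argument at all for this lemma, simply asserting these as well-known properties of the Becker--Gottlieb transfer with a citation to \cite{beckergottlieb}, which is exactly the standard construction and projection-formula/Euler-characteristic computation you spell out. The details you supply (the module-spectrum structure for (1) and the fibre integral of the Euler class of the vertical tangent bundle for (2)) are the standard ones and are accurate.
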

  
  We will need to relate the transfer map with the Pontrjagin-Thom map $\alpha$ described above.  To do this we first need to consider the spectrum
  map
  $$
  \tilde w : \bcp^\infty_{-1} \to \Sigma^\infty (\bcp^\infty_+).
  $$
  This can be viewed as ``collapsing" the $-2$-dimensional sphere in $\bcp^\infty_{-1}$, but more precisely it is induced by
  map of Thom spectra,
  $$
  \tilde w :  \bcp^\infty_{-1} = (\bcp^\infty)^{-L} \to (\bcp^\infty)^{-L \oplus L} = \Sigma^\infty (\bcp^\infty_+).
  $$
  Here $L \to \bcp^\infty$ is the canonical oriented two dimensional bundle, and $-L$ is the corresponding virtual
  bundle given by its opposite.  The exponential notation $X^\zeta$ denotes the Thom spectrum of a virtual bundle $\zeta$ over a space $X$.  
  This map of Thom spectra is induced by the inclusion of virtual bundles, $-L \hk -L \oplus L$, where, of course, $-L \oplus L$ is the trivial zero dimensional virtual bundle.  
  
  Taking the smash product with the identity produces a similar map
  \begin{equation}\label{doubu}
  \tilde w(K) : \bcp^\infty_{-1} \wedge K_+  = (\bcp^\infty \times K)^{-L} \to (\bcp^\infty \times K)^{-L \oplus L} = \Sigma^\infty((\bcp^\infty \times K)_+).
  \end{equation}
  Here we are thinking of $L \to \bcp^\infty \times K$ as the pull-back of the bundle $L \to \bcp^\infty$ under the projection map $\bcp^\infty \times K \to \bcp^\infty$. 
  
  Consider the induced map on zero spaces,
  $$
  w(K) : \ockz \to \Omega^\infty \Sigma^\infty ((\bcp^\infty \times K)_+),
  $$ as well as the cohomology suspension map
  $$
  \sigma^* : H^*(\bcp^\infty \times K) \to H^*(\Omega^\infty \Sigma^\infty ((\bcp^\infty \times K)_+)).
  $$
  
  \med
  \begin{lemma}\label{lemone}  Let $\rho = \tau_v \times e : \sgo (K) \to \bcp^\infty \times K$.  Then if
  $\xi \in H^*( \bcp^\infty \times K)$,  
  $$
  t^*\rho^*(\xi) = \alpha^*w(K)^*\sigma^*(\xi) \in H^*(\crgk).
  $$
  \end{lemma}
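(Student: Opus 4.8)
Abbreviate $E=\sgo(K)$ and $B=\crgk$, so that $p\colon E\to B$ is the universal curve of Proposition \ref{markfib} and $\rho=\tau_v\times e\colon E\to\bcp^\infty\times K$. First I would promote the asserted identity in $H^*(B)$ to an identity of stable maps. Let $\hat\alpha\colon\Sigma^\infty(B_+)\to\bcp^\infty_{-1}\wedge K_+$ be the stable map adjoint to $\alpha\colon B\to\ockz=\Omega^\infty(\bcp^\infty_{-1}\wedge K_+)$, and recall $w(K)=\Omega^\infty\tilde w(K)$. Naturality of the cohomology suspension gives $w(K)^*\sigma^*(\xi)=\sigma^*\bigl(\tilde w(K)^*(\xi)\bigr)$, and the standard relation $\alpha^*\circ\sigma^*=\hat\alpha^*$ between a map to an infinite loop space and its stable adjoint then yields $\alpha^*w(K)^*\sigma^*(\xi)=\bigl(\tilde w(K)\circ\hat\alpha\bigr)^*(\xi)$. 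Since $\rho^*=(\Sigma^\infty(\rho_+))^*$, the left-hand side of the lemma equals $\bigl((\Sigma^\infty(\rho_+))\circ t\bigr)^*(\xi)$. So it suffices to prove the homotopy
$$
(\Sigma^\infty(\rho_+))\circ t\;\simeq\;\tilde w(K)\circ\hat\alpha\colon\Sigma^\infty(B_+)\longrightarrow\Sigma^\infty\bigl((\bcp^\infty\times K)_+\bigr).
$$

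Next I would unwind the geometric construction of the Becker--Gottlieb transfer of $p\colon E\to B$, whose fibre is the closed surface $F_g$. Choosing a finite-dimensional approximation to the tautological fibrewise embedding $E\hookrightarrow B\times\br^N$ (over a point of $B$ the fibre $F_g\cong S_g$ already lies in $\br^\infty$), the vertical normal bundle $\nu^v$ satisfies $\nu^v\oplus T_vE\cong\underline{\br^N}$, and the fibrewise Pontrjagin--Thom collapse composed with this Thom identification is a stable map $c\colon\Sigma^\infty(B_+)\to\Sigma^\infty(E^{-T_vE})$. The transfer is then $t=(\mathrm{id}\wedge z)\circ\Delta\circ c$, where $\Delta\colon E^{-T_vE}\to E_+\wedge E^{-T_vE}$ is the Thom diagonal and $z\colon E^{-T_vE}\to\bs$ is the map to the sphere spectrum induced by the inclusion of virtual bundles $-T_vE\hookrightarrow 0$ (equivalently, the Thomification of $\nu^v\hookrightarrow\underline{\br^N}$); see \cite{beckergottlieb}, \cite{madsenweiss}.

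Two observations then finish the argument. First, comparing the construction above with the definition of the Madsen--Weiss map one sees that $\hat\alpha=\bar\rho\circ c$, where $\bar\rho\colon E^{-T_vE}=E^{-\rho^*L}\to(\bcp^\infty\times K)^{-L}=\bcp^\infty_{-1}\wedge K_+$ is the map of Thom spectra induced by $\rho$; here one uses that $\tau_v$ classifies the vertical tangent bundle, i.e. $T_vE\cong\rho^*L$, with $L$ pulled back from $\bcp^\infty$ as in (\ref{doubu}). Second, again because $T_vE\cong\rho^*L$, the map $z$ factors as $z=z'\circ\bar\rho$ with $z'\colon(\bcp^\infty\times K)^{-L}\to\bs$ the Thomification of $-L\hookrightarrow 0$, and a direct check identifies $(\mathrm{id}\wedge z')\circ\Delta'$ --- the Thom diagonal on $(\bcp^\infty\times K)^{-L}$ followed by $\mathrm{id}\wedge z'$ --- with the map $\tilde w(K)$ of (\ref{doubu}), both being induced by the inclusion of virtual bundles $-L\hookrightarrow-L\oplus L$. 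Using naturality of the Thom diagonal for $\rho$ one obtains $(\Sigma^\infty(\rho_+))\circ(\mathrm{id}\wedge z)\circ\Delta=\tilde w(K)\circ\bar\rho$, hence
$$
(\Sigma^\infty(\rho_+))\circ t=\tilde w(K)\circ\bar\rho\circ c=\tilde w(K)\circ\hat\alpha,
$$
as required.

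The hard part will be the bookkeeping behind the last two observations: one must check that the internal data of the Becker--Gottlieb transfer (choice of tubular neighbourhood, Thom isomorphisms, orientations) is compatible with the parametrised Pontrjagin--Thom construction used to define $\alpha$, and that ``Thom diagonal followed by the fibrewise scalar map $z$'' reproduces $\tilde w(K)$ on the nose. Both are standard facts in this circle of ideas, but getting all the identifications to agree takes care; the reduction in the first paragraph is purely formal.
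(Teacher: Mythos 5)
Your proposal is correct and follows essentially the same route as the paper: reduce via adjointness and the cohomology suspension to the stable identity $\Sigma^\infty(\rho_+)\circ t \simeq \tilde w(K)\circ\tilde\alpha$, then verify it by factoring both the transfer and the Madsen--Weiss map through the pretransfer $\Sigma^\infty(\crgk_+)\to\sgo(K)^{-\tau_v}$ and using that $\tilde w(K)$ and the second half of the transfer are both induced by the bundle inclusion $-L\hookrightarrow -L\oplus L$ (your Thom-diagonal description of $t$ is just a more explicit spelling-out of the paper's map $\sgo(K)^{-\tau_v}\to\sgo(K)^{-\tau_v\oplus\tau_v}$). The paper likewise leaves the compatibility of the two Pontrjagin--Thom constructions as a homotopy-commutative diagram asserted without further detail.
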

  
  \med
  Before we prove this, we show how lemmas \ref{trans} and  \ref{lemone}  together imply the main theorem of this section:
  
  \med
  \begin{theorem}\label{main}
 $$ \alpha^* : H^*(\Omega^\infty_0(\bcp^\infty_{-1} \wedge K_+); \bq) \la H^*(\crgk; \bq)$$
 is an isomorphism for $3* \leq 2g-3.$
 \end{theorem}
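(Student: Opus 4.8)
The plan is to deduce Theorem~\ref{main} from Theorem~\ref{markedpoint} by using the Becker--Gottlieb transfer of the universal curve $p:\sgo(K)\to\crgk$ from Proposition~\ref{markfib} to realize $\crgk$ as a rational retract of $\sgo(K)$. One works with $\bq$ coefficients throughout and may assume $g\geq 2$, since for $g\leq 1$ the claimed range $3*\leq 2g-3$ is empty. The first step is to note that Lemma~\ref{trans}, applied with $\eps=1$ and combined with $t^*(1)=\chi(F_g)=2-2g$, gives
\[
t^*\big(p^*(\beta)\big)=\beta\cdot t^*(1)=(2-2g)\,\beta\qquad\text{for every }\beta\in H^*(\crgk;\bq).
\]
Since $2-2g\neq 0$, this shows $p^*$ is split injective with retraction $r:=(2-2g)^{-1}t^*$ mapping $H^*(\sgo(K);\bq)$ onto $H^*(\crgk;\bq)$.

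Next I would extract from the compatibility lemmas how the transfer behaves on products. Writing $\rho=\tau_v\times e$ and using $\tilde\alpha=\alpha\circ p$, any product $\tilde\alpha^*(a)\cup\rho^*(\xi)$ with $a\in H^*(\ockz;\bq)$ and $\xi\in H^*(\bcp^\infty\times K;\bq)$ equals $p^*(\alpha^*a)\cup\rho^*(\xi)$, so the projection formula of Lemma~\ref{trans}(1) gives
\[
t^*\big(\tilde\alpha^*(a)\cup\rho^*(\xi)\big)=\alpha^*(a)\cup t^*\rho^*(\xi).
\]
By Lemma~\ref{lemone} the factor $t^*\rho^*(\xi)=\alpha^*\big(w(K)^*\sigma^*(\xi)\big)$ lies in the image of $\alpha^*$ (for $\xi$ a constant this also follows directly from Lemma~\ref{trans}(2)), so $t^*\big(\tilde\alpha^*(a)\cup\rho^*(\xi)\big)=\alpha^*\big(a\cup w(K)^*\sigma^*(\xi)\big)$ lies in the image of $\alpha^*$; hence so does the image of any such product under the retraction $r$.

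The third step brings in Theorem~\ref{markedpoint}. Fix $q$ with $3q\leq 2g-3$, so in particular $3q\leq 2g-2$. Then $(\alpha^1)^*=(\tilde\alpha\times\tau_v\times e)^*$ is an isomorphism on $H^q(-;\bq)$, and by the K\"unneth theorem over $\bq$ it carries the classes $a\otimes\xi$ to the products $\tilde\alpha^*(a)\cup\rho^*(\xi)$, so the latter span $H^q(\sgo(K);\bq)$. Surjectivity of $\alpha^*$ in degree $q$ now follows: given $\beta\in H^q(\crgk;\bq)$ one has $\beta=r\big(p^*\beta\big)$, and expanding $p^*\beta$ in these products and applying the second step puts $\beta$ in the image of $\alpha^*$. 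Injectivity of $\alpha^*$ in degree $q$ follows as well: if $\alpha^*(a)=0$ with $a\in H^q(\ockz;\bq)$, then $(\alpha^1)^*(a\otimes 1)=\tilde\alpha^*(a)=p^*\alpha^*(a)=0$, and $(\alpha^1)^*$ is injective in degree $q$, so $a=0$.

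I do not expect a genuine obstacle here, only bookkeeping. The two points needing care are that the projection formula of Lemma~\ref{trans}(1) be invoked for classes of the special form $\beta=\alpha^*(a)$, exploiting the factorization $\tilde\alpha=\alpha\circ p$, and that the retraction $r$ exists precisely because $\chi(F_g)=2-2g$ is invertible over $\bq$ --- which is why the statement is necessarily vacuous in genus $\leq 1$. Everything else --- the K\"unneth identification, the projection formula, and the relations among $\alpha$, $\alpha^1$, $p$, $\tau_v$, and $e$ --- is formal once Theorem~\ref{markedpoint} and Lemma~\ref{lemone} are in hand.
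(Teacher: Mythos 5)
Your proof is correct and takes essentially the same approach as the paper's: split injectivity of $p^*$ via the transfer and the Euler characteristic, and surjectivity via the projection formula, Lemma~\ref{lemone}, and the K\"unneth decomposition of $(\alpha^1)^*$ supplied by Theorem~\ref{markedpoint}. The only cosmetic difference is that you package the argument through the explicit retraction $r=(2-2g)^{-1}t^*$, where the paper argues directly with the surjectivity of $t^*$ and the injectivity of $p^*\circ\alpha^*$.
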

 
 \med
 \begin{proof}  It follows from Lemma \ref{trans} that $t^* \circ p^* : H^*(\crgk) \to H^*(\crgk)$ is multiplication by $\chi (F_g) = 2-2g$.
 So with rational coefficients,  this composition is an isomorphism so long as $g \neq 1$.  (Notice the statement of the theorem is vacuous if $g= 1$, so we lose no generality in assuming $g \neq 1$.)  It follows that $p^*$ is injective, and $t^*$ is surjective in rational cohomology.
 
 In the stable range (dimensions less than or equal to $\frac{2}{3}g - 1$),  Theorem \ref{markedpoint} implies that  $(\alpha^1)^* : H^*(\ockz \times \bcp^\infty \times K) \to H^*( \sgo (K))$ is an isomorphism.  Now by the definition of $\alpha^1$   given in (\ref{alphao}) above, the following diagram   commutes:
  $$
  \begin{CD}
  \sgo (K)  @>\alpha^1 >>  \ockz \times \bcp^\infty \times K \\
  @V p VV   @VVproj. V \\
  \crgk @>> \alpha >    \ockz.
  \end{CD}
  $$
  Thus $p^* \circ \alpha^*$ is injective in this stable range, and hence so is $\alpha^*$.  It remains to show that $\alpha^*$ is surjective in this range.
  
  Let $\eta \in H^*(\Omega^\infty_0(\bcp^\infty_{-1} \wedge K_+) ; \bq)$ and $\xi \in H^*(\bcp^\infty \times K; \bq)$ be classes so that the sum of their dimensions is in the stable range.  Then by definition,  $(\alpha^1)^*(\eta \otimes \xi) =  p^*\alpha^*(\eta) \cdot \rho^*(\xi)$, and by Lemma \ref{trans}, $t^*(\alpha^1)^*(\eta \otimes \xi) =t^*p^* \alpha^*(\eta)\cdot t^*\rho^*(\xi)  = (2-2g) \alpha^*(\eta)\cdot t^*\rho^*(\xi)$. Lemma  \ref{lemone}  shows that $t^*\rho^*(\xi)$, and therefore $t^*(\alpha^1)^*(\eta \otimes \xi)$ belong to the image of $\alpha^*$.  Since $t^*$ is surjective and $(\alpha^1)^*$ is an isomorphism in this range, we conclude that $\alpha^*$ is surjective in this range.
  \end{proof}
  
  It remains to prove Lemma \ref{lemone}.   Consider again the fiber bundle, $F_g \to \sgo(K) \xr{p} \crgk.$  The transfer map, $t : \Sigma^\infty(\crgk_+) \to \Sigma^\infty (\sgo (K)_+)$ is defined to be the composition
  $$
  t : \Sigma^\infty(\crgk_+) \xr{\tilde t} \sgo (K)^{-\tau_v} \to  \sgo (K)^{-\tau_v \oplus \tau_v} = \Sigma^\infty (\sgo (K)_+)
  $$
  where $\tilde t : \Sigma^\infty(\crgk_+) \to \sgo (K)^{-\tau_v}$ is the Pontrjagin-Thom map, or ``pretransfer".  The bundle $\tau_v$ over  $\sgo (K)$ is the vertical tangent bundle, classified by $\tau_v : \sgo (K) \to \bcp^\infty$.   The map $\rho = \tau_v \times e : \sgo (K) \to \bcp^\infty \times K$  induces a map of Thom spectra,
  $$
  T(\rho) : \sgo (K)^{-\tau_v} \to (\bcp^\infty \times K)^{-L} = \bcp^\infty_{-1} \wedge K_+.
  $$
  As in \cite{madsenweiss}, \cite{GMTW},  the map of spectra $\tilde \alpha : \Sigma^\infty (\crgk_+) \to \bcp^\infty_{-1} \wedge K_+$ is defined to be the composition
  $$
   \tilde \alpha : \Sigma^\infty(\crgk_+) \xr{\tilde t} \sgo (K)^{-\tau_v} \xr{T\rho} (\bcp^\infty \times K)^{-\tau_v} = \bcp^\infty_{-1} \wedge K_+.
   $$ 
   $\alpha : \crgk \to \Omega^\infty(\bcp^\infty_{-1} \wedge K_+)$ is the adjoint of this map.  Using the definition of $\tilde w (K)$ given in (\ref{doubu}), we then have the following homotopy commutative diagram of spectra:
   $$
   \begin{CD}
   \Sigma^\infty(\crgk_+)    @>\tilde t >> \sgo (K)^{-\tau_v}   @>T(\rho) >> (\bcp^\infty \times K)^{-L}  @>\simeq >> \bcp^\infty_{-1} \wedge K_+ \\
   @V=VV    @VVV    @VVV  @VV\tilde w (K) V \\
    \Sigma^\infty(\crgk_+)    @> t >> \sgo (K)^{-\tau_v \oplus \tau_v} @>\rho >>   (\bcp \times K)^{-L\oplus L}   @>\simeq > > \Sigma^\infty ((\bcp^\infty \times K)_+)
  \end{CD}
  $$
  Since the top horizontal composition is $\tilde \alpha : \Sigma^\infty(\crgk_+) \to  \bcp^\infty_{-1} \wedge K_+$, we have that 
  $$
  \tilde w (K) \circ \tilde \alpha \simeq \rho \circ t : \Sigma^\infty(\crgk_+) \to \Sigma^\infty ((\bcp^\infty \times K)_+).
  $$ So in cohomology, $t^*\circ \rho^* = \tilde \alpha^* \circ \tilde w (K)^*$.  Combining this with the general relationship
  $$
   \tilde \alpha^* \circ \tilde w (K)^* = \alpha^* \circ w(K)^* \circ \sigma^*
   $$
   between maps of suspension spectra and their adjoints, completes the proof of the lemma.

   \section{A counter example to stability for mapping class groups for closed surfaces}
   The theorem below, due to S, Morita \cite{morita},  is a counter example to Theorem 0.4 of \cite{cohenmadsen} which postulates that $H_*(\Gamma_g,  V_g)$ is indepedent of the genus $g$, when the $\Gamma_g$-module $V_g  = V(F_g)$ is a coefficient satisfying certain degree conditions. We stress, however, that Theorem 0.4 remains true for surfaces with at least one boundary component:  $H_*(\Gamma_{g,n},   V(F_{g,n}))$ does have a stable range for $n>0$.  
   Here, as usual, $\Gamma_{g,n}$ is the mapping class group of orientation preserving diffeomorphisms of an oriented surface $F_g$ of genus $g$ with $n$ boundary components.  The modules $V (F_{g,n}) = H_1(F_{g,n})$ define a coefficient
   system satisfying the stated degree requirements, but as Morita's theorem below clearly implies, $H_*(\Gamma_g, H_1(F_g))$ clearly does not satisfy any stability property.  This example was pointed out to the authors by Johannes Ebert.
   
   \med
   \begin{theorem}(Morita) \cite{morita}   $$H_1(\Gamma_g; H_1 (F_g)) \cong \bz/2(g-1).$$
   \end{theorem}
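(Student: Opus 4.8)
The plan is to turn the coefficient module $H_1(F_g)$ into geometry by taking the background space to be $K=\bcp^\infty=K(\bz,2)$. Since $\bcp^\infty$ is an infinite loop space, $Map(F_g,\bcp^\infty)$ splits as $K(H^0(F_g;\bz),2)\times K(H^1(F_g;\bz),1)\times H^2(F_g;\bz)$, i.e.\ as $\bcp^\infty\times T^{2g}\times\bz$, and Poincar\'e duality gives a $\G_g$-equivariant isomorphism $H^1(F_g;\bz)\cong H_1(F_g;\bz)$. Restricting to the null-homotopic component of the mapping space, one finds from \eqref{closed} and \eqref{mark} that $\crs_g(\bcp^\infty)^0\simeq \bcp^\infty\times\bigl(E\G_g\times_{\G_g}T^{2g}\bigr)$ and $\sgo(\bcp^\infty)^0\simeq\bcp^\infty\times\bigl(E\G_g\times_{\G_g}(F_g\times T^{2g})\bigr)$, the factor $E\G_g\times_{\G_g}T^{2g}$ being a model for the universal family of Jacobians over $B\G_g$. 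The group $H_1(\G_g;H_1F_g)$ is precisely the entry $E^2_{1,1}=H_1(\G_g;H_1(T^{2g}))$ of the Lyndon--Hochschild--Serre spectral sequence of $T^{2g}\to E\G_g\times_{\G_g}T^{2g}\to B\G_g$, whose $E^2$-page is $H_p(\G_g;\Lambda^qH_1F_g)$.

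I would then collect the low-degree input. On the algebraic side: $H_0(\G_g;H_1F_g)=0$ (symplectic transvections generate the needed subgroup), $H_0(\G_g;\Lambda^2H_1F_g)$ is infinite cyclic for $g$ large (generated by the image of the symplectic form, by $\mathrm{Sp}(2g,\bz)$-invariant theory), and $H_1(\G_g;\bz)=0$, $H_2(\G_g;\bz)\cong\bz$ for $g$ large (Harer, Mumford). On the geometric side one needs $H_{\le 2}$ of the two total spaces: Theorem~\ref{markedpoint} computes $H_q(\sgo(\bcp^\infty)^0;\bz)$ for $3q\le 2g-2$ as the homology of (the relevant component of) $\ockz\times\bcp^\infty\times\bcp^\infty$, and Theorem~\ref{main} computes $H_q(\crs_g(\bcp^\infty)^0;\bq)$ in a comparable range. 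Feeding these into the spectral sequence above, together with the Serre spectral sequence of the universal curve $F_g\to\sgo(\bcp^\infty)^0\xrightarrow{p}\crs_g(\bcp^\infty)^0$, pins down all differentials and extensions in total degree $\le 2$ except for a single genus-dependent differential.

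That differential — and hence the modulus — is supplied by the Becker--Gottlieb transfer exactly as in Section~2. For the universal curve $F_g\to\sgo(\bcp^\infty)^0\xrightarrow{p}\crs_g(\bcp^\infty)^0$, Lemma~\ref{trans} gives $t^*\circ p^*=\chi(F_g)\cdot\mathrm{id}=(2-2g)\cdot\mathrm{id}$ on $H^*(\crs_g(\bcp^\infty)^0)$; equivalently, in the cohomology Serre spectral sequence of this bundle the vertical Euler class $e(T_v)$ restricts on a fibre to $(2-2g)\,u$, with $u$ the positive generator of $H^2(F_g;\bz)$. So $(2g-2)u$ is a permanent cycle, which forces the transgressions of $u$ (landing in $H^2(\G_g;H^1F_g)$ and $H^3(\G_g;\bz)$) to be $(2g-2)$-torsion, while the stability theorems force $u$ itself not to survive. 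Reconciling these two constraints with the genus-\emph{independent} stable values of $H_2$ of the total spaces forces $E^2_{1,1}=H_1(\G_g;H_1F_g)\cong\bz/(2g-2)$, the generator arising as the transgression of the symplectic form; the remaining small $g$ are handled directly (the statement is classical there).

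The main obstacle is the spectral-sequence bookkeeping, which has to be carried out integrally: one must track several interlocking spectral sequences, show that the torsion in $H_3(\G_g;\bz)$ does not perturb $E^\infty_{1,1}$, confirm that $H_0(\G_g;\Lambda^2H_1F_g)$ is exactly $\bz$, and isolate the one differential through which $\chi(F_g)$ enters, verifying that it is multiplication by $\pm(2g-2)$ rather than by a proper divisor. A variant that avoids part of this is to compare through $\G_{g,1}$ instead: the groups $H_*(\G_{g,1};H_1F_{g,1})$ do stabilize (the corrected Theorem~0.4 of \cite{cohenmadsen} for $n>0$, or \cite{boldsen}), and the Birman sequence together with the central extension $1\to\bz\to\G_{g,1}\to\pi_0 Diff(F_g,y)\to1$, combined with the same transfer/Euler-class input, convert the stable value into $\bz/(2g-2)$ — the modulus again appearing only because the vertical Euler class evaluates to $\chi(F_g)$ on the fibre.
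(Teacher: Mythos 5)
Your proposal correctly identifies the mechanism that produces the modulus $2g-2$ --- the vertical Euler class of the universal curve restricting to $\chi(F_g)$ times the generator on a fibre, fed into a Serre spectral sequence whose $E^2_{1,1}$-term is $H_1(\G_g;H_1(F_g))$ --- and this is indeed the engine of the paper's proof. But the route you take to it has a genuine gap. The decisive input must be the \emph{integral} second homology of the total space of the relevant bundle over $B\G_g$, because the group you are trying to isolate, $\bz/(2g-2)$, is finite and therefore invisible rationally. Theorem \ref{main}, which you cite for $H_*(\crs_g(\bcp^\infty))$, is a statement with $\bq$-coefficients only, so it cannot constrain $E^\infty_{1,1}$ at all; and Theorem \ref{markedpoint}, while integral, computes $H_*(\sgo(\bcp^\infty))$, whose fibre over $B\G_g$ is $F_g\times T^{2g}\times\bcp^\infty$, so the bookkeeping in total degree $2$ involves $H_1F_g\otimes H_1F_g$ and $\Lambda^2 H_1F_g$ terms you have not controlled. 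The detour through $K=\bcp^\infty$ is in fact unnecessary: the coefficient module $H_1(F_g)$ already appears as $E^2_{1,1}$ in the Serre spectral sequence of the universal curve $F_g\to B\gp\xr{p} B\G_g$ itself, where $\gp=\pi_0 Diff(F_g,y)$, and this is where the paper works. The integral input the paper uses, and which your sketch never secures, is Harer's computation together with Boldsen's integral stability, giving $H_2(B\gp)\cong H_2(B\gb)\oplus H_2(BGL^+(2,\br))\cong\bz\oplus\bz$ via the fibration of Lemma \ref{fibration}, plus a splitting $S$ of $p_*$ (coming from the factorization of the stably iso map $\kappa_*$ through $p_*$) that kills all differentials out of the bottom row.

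The second gap is the final step: ``reconciling these two constraints forces $E^2_{1,1}\cong\bz/(2g-2)$'' is an assertion, not an argument. What actually has to be proved is (i) that $d^2:E^2_{2,1}\to E^2_{0,2}$ vanishes, so that $E^\infty_{0,2}=H_2(F_g)\cong\bz$, and (ii) that the resulting extension $0\to H_2(F_g)\to H_2(B\gp)/\mathrm{Im}\,S\to H_1(\G_g;H_1F_g)\to 0$ is identified, under $H_2(B\gp)/\mathrm{Im}\,S\cong H_2(BGL^+(2,\br))\cong\bz$, with multiplication by $\chi(F_g)=2-2g$, because the comparison map is induced by the classifying map $\tau:F_g\to BGL^+(2,\br)$ of the tangent bundle. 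Both points come out of one commutative diagram (injectivity of $\tau_*$ gives (i), and (ii) gives the cokernel $\bz/(2g-2)$), but neither is automatic; in particular your claim that ``the stability theorems force $u$ itself not to survive'' is not the right dichotomy --- in homology the fibre class $[F_g]$ \emph{does} survive, and the point is rather that its image in $H_2(B\gp)/\mathrm{Im}\,S\cong\bz$ is divisible by exactly $2g-2$. Your closing variant through the central extension $1\to\bz\to\gb\to\gp\to 1$ and the Birman sequence is essentially the paper's argument, and is the version you should write out in full.
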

   
   \med
   We give a proof of this theorem that is somewhat different in spirit from Morita's proof in \cite{morita}.  In fact the proof we present below is similar in spirit to the proof of our main Theorem \ref{main} above.
   
   \med
   Let $\gp = \pi_0(Diff (F_g, x_0))$ and $\gb = \pi_0(Diff (F_g, D^2)$ be mapping class groups, where $D^2 \subset F_g$ is a fixed, small disk, with $x_0 \in D^2$.  
   In $Diff (F_g, D^2)$ the diffeomorphisms fix a neighborhood of $D^2$ pointwise, and in $Diff (F_g, x_0)$ the diffeomorphisms are only required to fix the point $x_0$.  Consider the diagram
   $$
   \begin{CD}
   B\gb   @>>>B\gp @>D >>  BGL^+(2,\br) \\
   @V\kappa VV   @Vp VV   \\
   B\G_g  @>>=> B\G_b.
   \end{CD}
   $$
   The upper horizontal sequence is the homotopy fibration from Lemma \ref{fibration}.  The map $\kappa$  caps off the boundary, and $p$ forgets the marked point.    The improved Harer stability theorem \cite{harer} proved by Boldsen in \cite{boldsen},  asserts that
   $$
   \kappa_* :H_*(B\gb) \to H_*(B\G_g)
   $$
   is an isomorphism in the range $3* \leq 2g-2$, and surjective for $3* \leq 2g$.  Since $\kappa$ factors as the composition $\kappa : B\gb \to B\gp \xr{p} B\G_g$,     in this stable range  $p_*: H_*(B\gp) \to H_*(B\G_g)$ has a right inverse,   $S : H_*(B\G_g) \to H_*(B\gp)$  satisfying $p_*\circ S = id$.  
   In this range we also have the computation
   $$H_2(B\gp) \cong H_2(B\gb) \oplus H_2(BGL^+(2, \br)) \cong \bz \oplus \bz. $$  The Serre spectral sequence for the fibration $F_g \to B\gp \xr{p} B\G_g$  has
   $$
   E^2_{p,q} = H_p(\G_g; H_q(F_g)).
   $$
   The existence of the section $S :  H_*(B\G_g) \to H_*(B\gp)$  implies that there are no nontrivial differential emanating from the base line, $E^2_{*, 0}$ in the stable range.  Now we know that
   \begin{align}
   E^2_{0,1} &= H_0(\G_g; H_1(F_g))  \, = 0 \notag \\
   E^2_{1,1} &= H_1(\G_g; H_1(F_g)) \notag \\
   E^2_{0,2} &= H_0 (\G_g; H_2(F_g)) \, = H_2(F_g) \cong \bz.
   \notag 
   \end{align}
   So  the only possible differential in total degrees less than or equal to $2$ is
   $$
   d^2 : E^2_{2,1} \to  E^2_{0,2}.
   $$
   This leads to the exact diagram
   $$
   \begin{CD}
   0 @>>>  E^2_{0,2}/Im \, d^2   @>>>H_2(\gp )/Im \, S     @>>> H_1(\G_g; H_1(F_g)) \to 0 \\
   & & @AAA     @V\cong V D V  \\
   & & H_2(F_g) @>\tau_* >>  H_2(BGL^+(2, \br))  \, \cong \bz \\
    & & @AAd^2 A \\
  &&  H_2(\G_g; H_1(F_g)) 
    \end{CD}
    $$
    The groups $H_2(F_g)$ and $H_2(BGL^+(2, \br))$ are each isomorphic to $\bz$, and $\tau : F_g \to  BGL^+(2, \br))$ classifies its tangent bundle, and hence induces multiplication by the Euler characteristic $2-2g$ in $H_2$.    We claim that this implies that the differential $d^2 = 0$.   This is because, since $\tau_*$ is injective and $D$ is an isomorphism, the commutativity of the above diagram implies $H_2(F_g) \to  E^2_{0,2}/Im \, d^2$  is injective, which, by exactness implies $d^2$ is zero.   Since, as remarked above,  $H_2(F_g) \to  E^2_{0,2}$ is an isomorphism, the commutativity and exactness of this diagram imply
    $$
    H_1(\G_g; H_1(F_g)) \cong cok \, (\tau_*) \cong \bz/(2g-2).
    $$

 \end{document}